\documentclass[12pt]{amsart}

\usepackage{amsthm}
\usepackage{amssymb}
\usepackage{cite}
\usepackage{enumerate} 
\usepackage{fullpage}
\usepackage[utf8]{inputenc}
\usepackage{hyperref}

% This must be included after all other interesting packages, but before
% the \newtheorem commands
\usepackage[capitalise]{cleveref}

\author{Maur\'icio Collares Neto}
\author{Robert Morris}

\linespread{1.12}

\title{Maximum-size antichains in random set-systems}
\address{IMPA, Estrada Dona Castorina 110, Jardim Bot\^anico, Rio de Janeiro, RJ, Brasil}
\email{\{collares|rob\}@impa.br}
\thanks{Research supported in part by a CAPES bolsa Proex (MCN) and by CNPq Proc.~479032/2012-2 and Proc.~303275/2013-8 (RM)}

\newtheorem{thm}{Theorem}[section]
\newtheorem*{HCL}{The Hypergraph Container Lemma}

\newtheorem{lemma}[thm]{Lemma}

\newtheorem{cor}[thm]{Corollary}

\theoremstyle{definition}

\newtheorem{defn}[thm]{Definition}

\newcommand{\binomn}{\binom{n}{n/2}}

\renewcommand{\c}{\operatorname{c}}
\newcommand{\C}{\operatorname{\mathcal{C}}}

\newcommand{\B}{\operatorname{\mathcal{B}}}

\def\le{\leqslant}
\def\leq{\leqslant}
\def\ge{\geqslant}
\def\geq{\geqslant}
\def\A{\mathcal{A}}
\def\C{\mathcal{C}}
\def\F{\mathcal{F}}
\def\G{\mathcal{G}}
\def\HH{\mathcal{H}}
\def\I{\mathcal{I}}
\def\P{\mathcal{P}}
\def\N{\mathbb{N}}
\def\eps{\varepsilon}

\begin{document}
  \begin{abstract}
    We show that, for $pn \to \infty$, the largest set in a $p$-random sub-family of the power set of $\{1, \ldots, n\}$ containing no $k$-chain has size $( k - 1 + o(1) ) p \binom{n}{n/2}$ with high probability. This confirms a conjecture of Osthus.
  \end{abstract}

  \maketitle

  \section{Introduction}

  One of the cornerstones of extremal set theory is the famous theorem of Sperner~\cite{MR1544925}, who proved in 1928 that the largest antichain in $\mathcal{P}(n)$, the family of all subsets of $\{1, \ldots, n\}$, has size $\binom{n}{n/2}$. In 1945, Erd\H{o}s~\cite{Erdos} generalized this result by showing that any family of sets larger than the $k-1$ middle layers of $\mathcal{P}(n)$ contains a $k$-chain.

  In this paper we will prove a sparse random analogue of Erd\H{o}s' theorem. More precisely, for every function $p \gg 1/n$ we will determine, with high probability, the (asymptotic) size of the largest sub-family of $\mathcal{P}(n,p)$, the $p$-random sub-family\footnote{That is, $\mathcal{P}(n, p)$ is a random variable such that $\mathbb{P}(A \in \mathcal{P}(n, p)) = p$ for each $A \in \mathcal{P}(n)$, and such events are independent for different values of $A$.} of $\mathcal{P}(n)$, containing no $k$-chain. This confirms a conjecture of Osthus~\cite{MR1757282}.

    \begin{thm}\label{thm:main}
     Let $2 \le k \in \mathbb{N}$, let $p = p(n)$ be such that $pn \to \infty$. Then the largest family $\A \subset \mathcal{P}(n,p)$ containing no $k$-chain has size
      \begin{equation}\label{eq:mainthm}
        |\A| = \big( k - 1 + o(1) \big) p \binomn
      \end{equation}
      with high probability as $n \to \infty$.
   \end{thm}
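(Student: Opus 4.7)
The proof splits into a lower bound, which is short, and an upper bound, which is the substance of the paper. For the lower bound, let $\mathcal{L} \subset \mathcal{P}(n)$ be the union of the $k-1$ middle layers of $\mathcal{P}(n)$. Then $\mathcal{L}$ is $k$-chain-free with $|\mathcal{L}| = (k - 1 + o(1))\binomn$, and since $pn \to \infty$ forces $p\binomn \to \infty$, Chernoff's inequality applied to $|\mathcal{P}(n,p) \cap \mathcal{L}|$ produces a $k$-chain-free sub-family of $\mathcal{P}(n,p)$ of size $(k - 1 + o(1)) p\binomn$ with high probability.

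For the upper bound I plan to apply the Hypergraph Container Lemma to the $k$-uniform hypergraph $\HH$ on vertex set $\mathcal{P}(n)$ whose edges are the $k$-chains $A_1 \subsetneq \cdots \subsetneq A_k$, so that independent sets in $\HH$ are precisely the $k$-chain-free families. The goal is to produce a family $\C$ of containers $C \subset \mathcal{P}(n)$ with three properties: (a) every $k$-chain-free family is contained in some $C \in \C$; (b) $|C| \le (k - 1 + \eps)\binomn$ for every $C \in \C$; and (c) $\log_2 |\C| = o(p\binomn)$. Granted such a family, a Chernoff bound together with a union bound over $\C$ shows that whp every container intersects $\mathcal{P}(n,p)$ in at most $(k - 1 + 2\eps) p \binomn$ elements, so every $k$-chain-free $\A \subset \mathcal{P}(n,p)$ satisfies $|\A| \le (k - 1 + 2\eps) p \binomn$, and the theorem follows on letting $\eps \to 0$.

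The two analytical ingredients needed to run this plan are a \emph{supersaturation} estimate for $k$-chains and good \emph{codegree} bounds for $\HH$. Supersaturation should say that every $\A \subset \mathcal{P}(n)$ with $|\A| \ge (k - 1 + \eps)\binomn$ contains at least $c(\eps) \cdot n^{k-1}\binomn$ many $k$-chains; this ought to follow from a weighted LYM-type counting argument, iterating Erd\H{o}s' theorem layer by layer and exploiting the structure of extremal configurations. The codegree analysis amounts to counting, for each $2 \le j \le k$ and each $j$-tuple of sets, the number of $k$-chains extending it, and controlling how this count varies with the layers occupied by the $j$-tuple; the outcome has to feed precisely into conditions (a)--(c) of the container lemma.

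The principal obstacle will be calibrating the container step so that condition (c) survives all the way down to the threshold $pn \to \infty$. A crude application of the container lemma is liable to leave factors of $n$ or $\sqrt{n}$ on the table, and so would only handle $p$ substantially larger than $1/n$; reaching the conjectured threshold requires a sharp use of the codegree profile of the $k$-chain hypergraph (every layer of the cube plays a different role) together with a supersaturation estimate quantitatively tight enough that the ``deletion'' step in the container iteration removes only a negligible proportion of each container. Once both of these are in hand, the Chernoff plus union bound finish is routine.
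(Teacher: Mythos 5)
Your overall strategy---lower bound from the $k-1$ middle layers plus Chernoff, upper bound via the container method applied to the $k$-chain hypergraph, fuelled by supersaturation and codegree bounds---is exactly the strategy of the paper, and your lower bound is correct. However, there are two substantive gaps in the upper bound, one of which is fatal to the plan as written.

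The fatal gap is in the finishing step. You propose to build a container family $\C$ with $\log_2|\C| = o(p\binomn)$, apply Chernoff to each container, and union bound over $\C$. This cannot work down to the threshold $pn \to \infty$: with any version of the container lemma, the fingerprints live inside sets of size $\Theta(\binomn)$ and have total size of order $\frac{1}{n}\binomn$, so the number of containers one produces is at least roughly $\binom{\Theta(\binomn)}{\frac{1}{n}\binomn} = \exp\big(\Theta\big(\tfrac{\log n}{n}\binomn\big)\big)$. For $p = \tfrac{\omega(n)}{n}$ with $\omega(n) \ll \log n$, this log-count is \emph{not} $o(p\binomn)$, no matter how sharp your supersaturation is; you recover only Osthus's original bound $pn \gg \log n$. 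The paper circumvents this by not unioning over containers at all. Instead, one unions over the ``fingerprints'' $T(I)$ and exploits that $T(I) \subset I \subset \P(n,p)$, so each fingerprint of size $s$ carries its own probability penalty $p^s$: writing $X$ for the number of fingerprints $A$ with $A \subset \P(n,p)$ and $|C(A)\cap\P(n,p)|$ large, one bounds
\[
\mathbb{E}(X) \;\le\; \sum_{s} \bigg(\frac{K\binomn}{s}\bigg)^s \exp\Big(\tfrac{K}{n}\binomn\Big)\cdot p^s \cdot \exp\Big(-\tfrac{\eps^2 p}{3}\binomn\Big),
\]
and the $p^s$ factor turns the effective exponent into $\tfrac{K\log(pn)}{n}\binomn$, which is $\ll p\binomn$ precisely when $pn \gg \log(pn)$, i.e.\ whenever $pn \to \infty$. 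This ``pay for the fingerprint'' idea is the key gain over a naive union bound, and your diagnosis---that the bottleneck is factors of $n$ lost in the deletion step---misses it.

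A second, more structural issue: you describe a single application of the Hypergraph Container Lemma producing containers of size $(k-1+\eps)\binomn$. The lemma as stated only shrinks $N$ vertices to at most $(1-\delta)N$, so you must iterate it $O(\log n)$ times to get from $2^n$ down to $(k-1+\eps)\binomn$, and this iteration is why the fingerprint is a \emph{sequence} $(T_1,\ldots,T_m)$ rather than a single set, and why the careful bookkeeping of Lemma~\ref{lemma:slogs} is needed. Relatedly, the ``codegree bounds for $\HH$'' you mention are not codegrees of the full restricted chain hypergraph $\G_k[\F]$: near the middle layer those degrees are far too large. What is actually required is the \emph{balanced} supersaturation statement of Theorem~\ref{thm:supersaturation}, which produces a carefully pruned subhypergraph $\HH \subset \G_k[\F]$ whose degrees at every uniformity $\ell$ are simultaneously controlled against the parameter $\tau$. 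The paper proves this by greedily adding good edges one at a time, using the permutation-based density bookkeeping of Das--Gan--Sudakov; the layer-by-layer iteration of Erd\H{o}s's theorem that you sketch would give many chains but no control on their distribution.
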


We remark that the bound on $p$ is best possible, since the result fails to hold whenever $pn \to C$. Indeed, in this case Osthus~\cite{MR1757282} showed that, with high probability, the two middle layers of $\mathcal{P}(n, p)$ contain an antichain $\mathcal{A}$ of size $\big( 1 + e^{-C/2}+o(1) \big)p\binomn$; adding $k-2$ further layers to $\mathcal{A}$ gives a family of size $\big( k - 1 + e^{-C/2}+o(1) \big)p\binomn$ containing no $k$-chains.

The alert reader may have noticed that in order to obtain Theorem~\ref{thm:main} it is sufficient to prove it in the case $k = 2$, since every family $\A \subset \mathcal{P}(n)$ containing no $k$-chain can be decomposed into $k-1$ antichains. (We thank Oliver Riordan for pointing this out to us.) Our proof will not use this simple fact, however,  proceeding instead via an application of the hypergraph container method (see Section~\ref{sec:containers}) to the $k$-uniform hypergraph encoding the $k$-chains of $\P(n)$. While more complicated that strictly necessary, this approach has two significant advantages: it motivates the proof of Theorem~\ref{thm:supersaturation}, below, which we consider to be of independent interest, and it gives an easily accessible introduction to the approach\footnote{This approach allows one to efficiently apply the hypergraph container lemma an unbounded number of times by combining it with a suitable `balanced supersaturation theorem', such as Theorem~\ref{thm:supersaturation}. It was used in~\cite{2013arXiv1309.2927M}, to prove (amongst other things) that the number of $C_{2k}$-free graphs with $n$ vertices is $2^{O(n^{1 + 1/k})}$.} of~\cite{2013arXiv1309.2927M}, which we expect to have numerous other applications.

  The study of the random set-system $\mathcal{P}(n, p)$ was initiated in 1961 by R\'enyi~\cite{MR0144366}, who determined the threshold for the event that $\mathcal{P}(n, p)$ is an antichain. More recently, Kreuter~\cite{K} and Kohayakawa, Kreuter and Osthus~\cite{KKO} studied the length of the longest chain in $\P(n,p)$, and Kohayakawa and Kreuter~\cite{KK} and Osthus~\cite{MR1757282} studied the size of the largest antichain. In particular, Osthus~\cite{MR1757282} proved that~\eqref{eq:mainthm} holds if $pn \gg \log n$, and conjectured that $pn \gg 1$ is sufficient. We note that this conjecture has also been proved independently by Balogh, Mycroft and Treglown~\cite{BMT}, who also studied sparser random set-systems.

 The problem of obtaining sparse random analogues of classical results in extremal combinatorics has attracted a large amount of attention in recent years, culminating in the extraordinary breakthroughs of Conlon and Gowers~\cite{2010arXiv1011.4310C} and Schacht~\cite{Schacht09}, who developed general techniques for solving such problems, and as a result were able to prove a number of longstanding conjectures, such as sparse random analogues of the theorems of Tur\'an and Szemer\'edi. A third approach, now known as the `hypergraph container method', was subsequently developed independently by Balogh, Morris and Samotij~\cite{2012arXiv1204.6530B} and by Saxton and Thomason~\cite{2012arXiv1204.6595S}. We will use this latter method in order to prove Theorem~\ref{thm:main}.

 In order to effectively apply the hypergraph container method (see Section~\ref{sec:containers}), one requires a so-called `balanced supersaturation theorem', and the proof of such a result (see Theorem~\ref{thm:supersaturation}, below) is the main innovation of this paper. An `unbalanced' supersaturation theorem (giving a lower bound on the number of $k$-chains, but not controlling the distribution of these chains) was proved by Kleitman~\cite{MR0232685} in the case $k = 2$, and by Das, Gan and Sudakov~\cite{2013arXiv1302.5210D} in general. More precisely, the authors of~\cite{2013arXiv1302.5210D} used the permutation method pioneered by Katona and LYMB\footnote{The acronym LYMB refers to Lubell~\cite{MR0194348}, Yamamoto~\cite{MR0067086}, Me\v{s}alkin~\cite{MR0150049} and Bollob\'as~\cite{MR0183653}. It often causes spelling confusion due to the silent B.} in order to show that a family with $t$ more elements than the extremal example above contains $\Omega\big( tn^{k-1} \big)$ $k$-chains. One of the key ideas from~\cite{2013arXiv1302.5210D} will also play an important role in our proof, see Lemma~\ref{lemma:dgs} below.

 In order to state our balanced supersaturation theorem, we will need a couple of simple definitions. For each $k \ge 2$ and $n \in \N$, let $\G_k = \G_k(n)$ denote the $k$-uniform hypergraph on vertex set $\P(n)$ whose edges encode $k$-chains, i.e., $\{ F_1, \ldots, F_k \} \in E(\G_k)$ if and only if $F_1 \supsetneq \cdots \supsetneq F_k$ for some ordering of the elements. Given $\mathcal{F} \subset \mathcal{P}(n)$, we write $\mathcal{H} \subset \G_k[\mathcal{F}]$ to denote that $\HH$ is a $k$-uniform hypergraph with vertex set $\F$ whose edges are all members of $E(\G_k)$. For each $\ell \in [k]$, we write $\Delta_\ell(\mathcal{H})$ for the maximum degree of a set of $\ell$ vertices of $\HH$, that is
 $$\Delta_\ell(\mathcal{H}) = \max\big\{ d_\HH(\A) \,:\, \A \subset V(\HH), \, |\A| = \ell \big\},$$
where $d_\HH(\A) = \big| \big\{ \B \in E(\HH) : \A \subset \B \big\} \big|$. We also write $\I(\HH)$ for the collection of independent sets of $\HH$, and $\alpha(\HH)$ for the size of the largest member of $\I(\HH)$.

 We can now state the key new tool that we will use to prove Theorem~\ref{thm:main}. It says that a family with slightly more than $\alpha(\G_k) = \big( k - 1 + o(1) \big) \binom{n}{n/2}$ elements not only contains many $k$-chains, but that these chains can be chosen to be fairly `evenly distributed' over $\P(n)$.

  \begin{thm}\label{thm:supersaturation}
    For every $k \ge 2$ and $\alpha > 0$, there exists $\delta = \delta(\alpha, k) > 0$ such that the following holds. Let $n \in \mathbb{N}$ and $\mathcal{F} \subset \mathcal{P}(n)$ satisfy $|\mathcal{F}| \ge (k-1+\alpha)\binom{n}{n/2}$, and suppose that $\delta^{-1} \le m \le \binom{|F|}{|G|}$ for every $F,G \in \F$ with  $F \supsetneq G$. Then there exists $\mathcal{H} \subset \G_k[\mathcal{F}]$ satisfying
    \begin{enumerate}[(a)]
      \item[$(a)$] $e(\mathcal{H}) \ge \delta^k m^{k-1} \binom{n}{n/2}$,
      \item[$(b)$] $\Delta_\ell(\mathcal{H}) \le (\delta m)^{k-\ell}$ \quad for every $1 \le \ell \le k$.
    \end{enumerate}
  \end{thm}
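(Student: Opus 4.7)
The strategy is to reduce to chains of a single ``level profile'' via pigeonhole, randomly sub-sample chains of that profile at a carefully chosen density, and then delete those edges responsible for violating the codegree bounds. I would first invoke Lemma~\ref{lemma:dgs} (the Das--Gan--Sudakov supersaturation) to obtain at least $c_1 n^{k-1}\binom{n}{n/2}$ strict $k$-chains in $\F$, where $c_1 > 0$ depends only on $\alpha$. Standard tail estimates on $\binom{n}{j}$ allow us to discard all chains whose member sizes stray more than $C\sqrt{n}$ from $n/2$ while losing at most a constant fraction, and a pigeonhole over the $O(n^{k-1})$ resulting size profiles $(s_1 > \cdots > s_k)$ then produces a single profile $P^* = (s_1^*, \ldots, s_k^*)$ carrying at least $c_2 \binom{n}{n/2}$ chains in $\F$. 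Call the associated $k$-uniform hypergraph $\HH_0$; restricting to one profile converts every codegree into a clean product of binomial coefficients and eliminates the awkward dependence on chain shape.

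Next, I would form $\HH_1$ from $\HH_0$ by keeping each edge independently with a probability $q$ tuned so that $\mathbb{E}\, e(\HH_1) \ge 2\delta^k m^{k-1}\binom{n}{n/2}$. For each $1 \le \ell \le k-1$ and each $\ell$-set $\A \subset V(\HH_0)$ that is itself a chain whose profile is a subsequence of $P^*$, the variable $d_{\HH_1}(\A)$ is a sum of $d_{\HH_0}(\A)$ independent Bernoulli variables of parameter $q$. I would then define $\HH$ by deleting every edge meeting any $\A$ with $d_{\HH_1}(\A) > (\delta m)^{k-\ell}$; a Chernoff-plus-union-bound computation should show that in expectation these ``bad'' $\ell$-sets account for only an $o(1)$ fraction of $e(\HH_1)$, so that the surviving edge count exceeds $\delta^k m^{k-1}\binom{n}{n/2}$ with positive probability, at which point a single realisation is fixed.

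The crux is the codegree analysis. The hypothesis $m \le \binom{|F|}{|G|}$ is a \emph{lower} bound on the freedom inside $\P(n)$, whereas we need \emph{upper} bounds on the typical $d_{\HH_0}(\A)$, which counts only extensions lying in $\F$. The crude estimate $d_{\HH_0}(\A) \le \prod_j \binom{s_{i_j}^*}{s_{i_{j+1}}^*}$ over the $k-\ell$ unfilled consecutive gaps is far too large in general, so the saving must come from an averaging step: writing $\sum_\A d_{\HH_0}(\A) = \binom{k}{\ell} e(\HH_0)$ with $\A$ ranging over sub-chains of matching profile in $\F$, convexity forces all but a tiny fraction of $\A$ to have codegree close to the average, which is of order $m^{k-\ell}$. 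Making this quantitative -- most likely by reapplying Lemma~\ref{lemma:dgs} to the ``link'' families $\{G \in \F : \A \cup \{G\} \text{ is a chain in } \F\}$ -- is where I expect the technical work to concentrate.
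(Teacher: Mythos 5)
Your proposal takes a genuinely different route from the paper -- random sparsification plus deletion of bad edges, as opposed to the paper's greedy, edge-by-edge construction -- but it has a real gap precisely at the step you flag as the crux, and I do not believe the gap is fixable by the averaging/convexity device you sketch.

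The problem is that mean control does not give maximum control, and deletion is not a free lunch. After you pigeonhole onto a single profile $P^*$ and sparsify at rate $q$, the codegree $d_{\HH_1}(\mathcal{A})$ of a fixed $\ell$-set $\mathcal{A}$ concentrates (by Chernoff) around $q \cdot d_{\HH_0}(\mathcal{A})$; if that mean already exceeds $(\delta m)^{k-\ell}$, concentration hurts rather than helps. Averaging over $\mathcal{A}$ tells you $\sum_{\mathcal{A}} d_{\HH_0}(\mathcal{A}) = \binom{k}{\ell} e(\HH_0)$, i.e.\ the \emph{typical} codegree is of order $m^{k-\ell}$, but it does not prevent a small number of $\ell$-sets from carrying codegrees of order $\prod_j \binom{s_{i_j}^*}{s_{i_{j+1}}^*}$, which can be $m^{k-\ell}/\delta^{k-\ell}$ or larger. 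Those few $\ell$-sets can jointly absorb a constant (or even dominant) fraction of $e(\HH_0)$, so the ``delete edges meeting bad $\mathcal{A}$'' step is not an $o(1)$ loss. A toy instance: let $\F$ be $k-1$ middle layers plus $\alpha\binomn$ extra sets, all superset of a fixed small set $Z$; then $\ell$-chains starting at a set above $Z$ soak up disproportionately many $k$-chains, and no global thinning rate $q$ can simultaneously leave $\Omega(m^{k-1}\binomn)$ edges and push all codegrees below $(\delta m)^{k-\ell}$. You would have to adaptively down-weight high-codegree regions, which is exactly what the deletion is supposed to do but at the cost you have not bounded.

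Your suggestion to ``reapply Lemma~\ref{lemma:dgs} to the link families'' is pointing in roughly the right direction -- the paper does iterate a Das--Gan--Sudakov-type inequality -- but the mechanism there is different and essential. The paper does not sparsify and repair; it builds $\HH$ greedily one edge at a time (Lemma~\ref{lemma:supersaturation}), and at each step it shows a ``good'' (addable) chain exists by a density argument on the weighted $\ell$-chain densities $\c_\ell$ (Definition~\ref{defn:chain_density}), using the \emph{minimality} of the cardinality of the top vertex $F_1$ to control the density of extensions below critical sub-chains. That minimality step is the ingredient your averaging lacks: it is what guarantees that the continuations of a critical chain do not themselves have inflated density. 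Also note the paper never restricts to a single size profile; the binomial weights in $\c_\ell$ handle the profile-averaging implicitly, and the hypothesis $m \le \binom{|F|}{|G|}$ feeds in through Lemma~\ref{lemma:critical_chains} (bounding the density of critical chains by $2^\ell \cdot 2\delta k \cdot \c_\ell(F_1)$), which has no analogue in your sketch.
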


We remark that the bounds in Theorem~\ref{thm:supersaturation} are all close to best possible. To see this, set $m = n/3$ and consider the $k-1$ middle layers of the hypercube, together with $\alpha {n \choose n/2}$ elements from the next layer up. Then $\G_k[\F]$ has $O\big( n^{k-1} \binom{n}{n/2} \big)$ edges and $\Delta_\ell\big( \G_k[\F] \big) = \Omega(n^{k-\ell})$ for every $1 \le \ell \le k$. The technical assumption $m \le \binom{|F|}{|G|}$ for every $F,G \in \F$ with $F \supsetneq G$ will be useful because it will allow us to deduce sufficiently strong bounds both when $|\F|$ is close to $\alpha(\G_k)$, and when it is much larger, see Section~\ref{sec:containers}.

   The rest of this paper is organised as follows. In Section~\ref{sec:containers} we outline the hypergraph container method, in Section~\ref{sec:supersaturation} we prove Theorem~\ref{thm:supersaturation}, and in Section~\ref{sec:putting_it_together} we perform the necessary technical computations in order to deduce Theorem~\ref{thm:main}.

  \section{Hypergraph containers}\label{sec:containers}

  In this section, we will recall the powerful method of hypergraph containers, which was recently introduced in~\cite{2012arXiv1204.6530B,2012arXiv1204.6595S}. Roughly speaking, the method allows us to deduce from a balanced supersaturation theorem (such as Theorem~\ref{thm:supersaturation}) that there exists a relatively small family of `containers', each not too large, which cover the family $\mathcal{I}(\mathcal{H})$ of independent sets of a $k$-uniform hypergraph $\mathcal{H}$. The key container lemma (see~\cite[Proposition~3.1]{2012arXiv1204.6530B} and~\cite[Theorem~3.4]{2012arXiv1204.6595S}) is as follows.

  \begin{HCL}
    For every $k \in \mathbb{N}$ and $c > 0$, there exists a $\delta > 0$ such that the following holds. Let $\tau \in (0,1)$ and suppose that $\mathcal{H}$ is a $k$-uniform hypergraph on $N$ vertices such that
    \begin{equation}\label{eq:containers:condition}
      \Delta_\ell(\mathcal{H}) \le c \cdot \tau^{\ell-1} \frac{e(\mathcal{H})}{N}
    \end{equation}
 for every $1 \le \ell \le k$. Then there exist a family $\mathcal{C}$ of subsets of $V(\mathcal{H})$, and a function $f \colon \P\big( V(\mathcal{H}) \big) \to \mathcal{C}$ such that:
    \begin{enumerate}
      \item[$(a)$] For every $I \in \mathcal{I}(\mathcal{H})$ there exists $T \subset I$ with $|T| \le k \cdot \tau N$ and $I \subset f(T) \cup T$,\smallskip
      \item[$(b)$] $|C| \le (1 - \delta) N$ for every $C \in \mathcal{C}$.
    \end{enumerate}
  \end{HCL}

    We first apply this lemma to the hypergraph $\G_k$, to obtain a large family $\C_1$ of containers, each of size at most $(1 - \delta)2^n$. We then apply the lemma again, for each $\F \in \C_1$ with $|\mathcal{F}| \ge (k-1+\alpha)\binom{n}{n/2}$ (for some small $\alpha > 0$), to the hypergraph $\mathcal{H} \subset \G_k[\mathcal{F}]$ given by Theorem~\ref{thm:supersaturation}. We repeat this process until all containers have size at most $(k-1+\alpha)\binom{n}{n/2}$. The conditions~$(a)$ and~$(b)$ in Theorem~\ref{thm:supersaturation} allow us to check that~\eqref{eq:containers:condition} holds for a suitable value of $\tau$, and hence to count the containers in our final collection. See~\cite{2013arXiv1309.2927M} for a similar application of the container lemma in the context of $C_{2k}$-free graphs.

 In order to further motivate the statement of Theorem~\ref{thm:supersaturation} (and the technical condition $m \le \binom{|F|}{|G|}$ for every $F,G \in \F$ with $F \supsetneq G$), we will next deduce from it the following two lemmas, which we will use to check the condition~\eqref{eq:containers:condition}. The first shows that we can take $\tau = 1/n$ when $\F$ is slightly larger than $\alpha(\G_k)$.

 \begin{lemma}\label{cor:smallF}
 For every $k \ge 2$ and $\alpha > 0$, there exists $c = c(\alpha, k) > 0$ such that the following holds. Let $n \in \mathbb{N}$ be sufficiently large and $\mathcal{F} \subset \mathcal{P}(n)$ satisfy $(k-1+\alpha)\binom{n}{n/2} \le |\mathcal{F}| \le 3k \binom{n}{n/2}$. Then there exists $\mathcal{H} \subset \G_k[\mathcal{F}]$ satisfying
   $$\Delta_\ell(\mathcal{H}) \le \frac{c}{n^{\ell - 1}} \cdot \frac{e(\mathcal{H})}{|\F|}$$
   for every $1 \le \ell \le k$.
 \end{lemma}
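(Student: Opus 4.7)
The plan is to apply \Cref{thm:supersaturation} to a subfamily of $\F$ obtained by discarding the small sets. Concretely, fix $c_0 = 1/4$, set $m := c_0 n$, and let $\F' := \{ F \in \F : |F| \ge m \}$. The standard tail estimate for binomial coefficients gives $\sum_{j < c_0 n} \binom{n}{j} = o\big( \binomn \big)$, so for sufficiently large $n$ we still have $|\F'| \ge (k - 1 + \alpha/2)\binomn$.

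Next I would verify the hypothesis of \Cref{thm:supersaturation}, applied with parameters $\alpha/2$ and $k$. For any $F, G \in \F'$ with $F \supsetneq G$ we have $1 \le |G| \le |F| - 1$, and $\binom{|F|}{|G|}$ is minimised over this range at $|G| = 1$ or $|G| = |F| - 1$; thus $\binom{|F|}{|G|} \ge |F| \ge m$. The condition $m \ge \delta^{-1}$ (where $\delta = \delta(\alpha/2, k) > 0$ is the constant supplied by that theorem) holds automatically for $n$ sufficiently large.

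Applying \Cref{thm:supersaturation} to $\F'$ then produces a $k$-graph $\HH$ with $e(\HH) \ge \delta^k m^{k-1}\binomn$ and $\Delta_\ell(\HH) \le (\delta m)^{k-\ell}$ for every $1 \le \ell \le k$. Adding the sets in $\F \setminus \F'$ as isolated vertices lets us regard $\HH$ as a subhypergraph of $\G_k[\F]$ (which can only decrease $\Delta_\ell$). Combining the edge-count bound with the upper estimate $|\F| \le 3k\binomn$ yields $\frac{e(\HH)}{|\F|} \ge \frac{\delta^k c_0^{k-1}}{3k}\, n^{k-1}$, so the target inequality reduces to $(\delta c_0)^{k-\ell} \le c\,\delta^k c_0^{k-1}/(3k)$ for every $\ell \in \{1, \ldots, k\}$; this is satisfied by taking $c := 3k/(\delta^k c_0^{k-1})$, a constant depending only on $\alpha$ and $k$.

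I do not anticipate a substantive obstacle: this lemma is essentially a bookkeeping exercise packaging the output of \Cref{thm:supersaturation} into a form compatible with the Hypergraph Container Lemma. The only delicate step is the choice of $c_0$: it must be strictly less than $1/2$ for the entropy bound to leave room for the $\alpha/2$ loss in density when we discard small sets, yet it enters the final constant $c$ only through the fixed combination $\delta^k c_0^{k-1}$.
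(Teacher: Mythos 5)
Your proof is correct and follows essentially the same route as the paper: both discard the sets of size below a linear threshold (the paper uses $m = n/3$, you use $m = n/4$), apply \Cref{thm:supersaturation} with that $m$, and then finish with the same arithmetic to choose $c$. The only cosmetic difference is that you keep the small sets as isolated vertices rather than invoking a "by adjusting $\alpha$" remark, which comes to the same thing.
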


 \begin{proof}
 First, observe that (by adjusting $\alpha$ slightly) we may assume that $|F| \ge n/3$ for every $F \in \F$, since the number of sets smaller than this is much smaller than ${n \choose n/2}$. Thus, applying Theorem~\ref{thm:supersaturation} with $m = n/3$, it follows that there exists a hypergraph $\mathcal{H} \subset \G_k[\mathcal{F}]$ and a constant $\delta = \delta(\alpha,k) > 0$ with $e(\mathcal{H}) \ge \delta^k m^{k-1} \binom{n}{n/2}$ and $\Delta_\ell(\mathcal{H}) \le (\delta m)^{k-\ell}$ for every $1 \le \ell \le k$. It follows that
   $$\Delta_\ell(\mathcal{H}) \le (\delta m)^{k-\ell} = \frac{3^\ell k}{\delta^{\ell} n^{\ell - 1}} \cdot \frac{\delta^{k} m^{k-1} \binom{n}{n/2}}{3k \binom{n}{n/2}} \le \frac{c}{n^{\ell - 1}} \cdot \frac{e(\mathcal{H})}{|\F|},$$
where $c = k \cdot (3/\delta)^k$, as required.
 \end{proof}

       The next lemma shows that if $|\F|$ is larger, then we can in fact take $\tau$ much smaller.

 \begin{lemma}\label{cor:bigF}
 For every $k \ge 2$, there exists $c = c(k) > 0$ such that the following holds. Let $n \in \mathbb{N}$ be sufficiently large and $\mathcal{F} \subset \mathcal{P}(n)$ satisfy $|\mathcal{F}| \ge 3k \binom{n}{n/2}$. Then there exists $\mathcal{H} \subset \G_k[\mathcal{F}]$ satisfying
   $$\Delta_\ell(\mathcal{H}) \le \frac{c}{n^{3\ell - 3}} \cdot \frac{e(\mathcal{H})}{|\F|}$$
   for every $1 \le \ell \le k$.
 \end{lemma}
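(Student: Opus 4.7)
The plan is to deduce Lemma~\ref{cor:bigF} from Theorem~\ref{thm:supersaturation} by partitioning most of $\F$ into many sub-families, each of which uses only widely-separated layers and therefore admits Theorem~\ref{thm:supersaturation} with $m$ of order $n^3$. I would first discard from $\F$ the sets of size outside $[n/3,2n/3]$; there are only $o(\binomn)$ such sets by standard entropy estimates, so removing them perturbs $|\F|$ negligibly. I then split what remains by the residue of $|F|$ modulo $3$, writing $\F = \F^{(0)} \cup \F^{(1)} \cup \F^{(2)}$. Within any single $\F^{(r)}$, every comparable pair $F \supsetneq G$ automatically satisfies $|F|-|G| \ge 3$ and $|G| \ge n/3$, so $\binom{|F|}{|G|} \ge \binom{n/3}{3} \ge n^3/200$ for $n$ sufficiently large.

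Fix $\alpha = 1/2$, let $\delta = \delta(\alpha,k)$ be the constant provided by Theorem~\ref{thm:supersaturation}, and set $s_0 = (k-1/2)\binomn$ and $m = \lfloor n^3/200 \rfloor$. I would partition each $\F^{(r)}$ arbitrarily into blocks of size exactly $s_0$, with (at most) one smaller leftover block per residue. Applying Theorem~\ref{thm:supersaturation} to each full block $\F^{(r),j}$ produces a hypergraph $\HH^{(r),j} \subset \G_k[\F^{(r),j}]$ with $e(\HH^{(r),j}) \ge \delta^k m^{k-1}\binomn$ and $\Delta_\ell(\HH^{(r),j}) \le (\delta m)^{k-\ell}$, and I would take $\HH$ to be the union $\bigcup_{r,j}\HH^{(r),j}$. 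Since the blocks are vertex-disjoint, $\Delta_\ell(\HH) \le (\delta m)^{k-\ell}$ and $e(\HH) \ge T\delta^k m^{k-1}\binomn$, where $T$ denotes the total number of full blocks. Writing $R = |\F|/\binomn$, the required inequality reduces to
$$
\frac{\Delta_\ell(\HH)\,|\F|}{e(\HH)} \;\le\; \frac{(\delta m)^{k-\ell}\,R}{T\delta^k m^{k-1}} \;=\; \frac{R/T}{\delta^\ell m^{\ell-1}},
$$
which, since $m = \Theta(n^3)$, is at most $C(R/T)\,n^{-(3\ell-3)}$ for some constant $C = C(k)$.

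The only remaining step is to bound $R/T$ by a constant depending only on $k$. Summing the trivial estimate $t_r \ge |\F^{(r)}|/s_0 - 1$ over the three residues yields $T \ge R/(k-1/2) - 3$, so that $R/T \le 2k-1$ whenever $R \ge 6(k-1/2)$. In the remaining range $3k \le R < 6(k-1/2)$, the hypothesis $|\F| \ge 3k\binomn > 3s_0$ forces at least one $|\F^{(r)}| \ge s_0$ by pigeonhole, so $T \ge 1$ and $R/T \le R \le 6k$. Either way $R/T$ is bounded in terms of $k$ alone, and choosing $c = c(k)$ sufficiently large completes the argument. The only delicate regime is this small-$R$ one, where the partition may yield only boundedly many blocks; that $R$ itself is then bounded compensates for the small block count.
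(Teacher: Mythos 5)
Your proof is correct and follows essentially the same approach as the paper's: restrict to middle-layer sets whose sizes share a residue modulo $3$ so that comparable pairs satisfy $\binom{|F|}{|G|} \ge \binom{n/3}{3}$, partition into blocks of size roughly $k\binomn$, apply Theorem~\ref{thm:supersaturation} with $m = \Theta(n^3)$ to each block, and take the disjoint union. The only cosmetic difference is the order of operations (the paper partitions into blocks of size $3k\binomn$ first and then pigeonholes on the residue within each block, whereas you split by residue globally and then block within each class), and the resulting argument is the same.
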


 \begin{proof}
 First, choose an arbitrary partition $\F = \F_0 \cup \F_1 \cup \cdots \cup \F_t$ such that $|\F_i| = 3k \binom{n}{n/2}$ for every $i \in [t]$ and $|\F_0| < 3k\binom{n}{n/2}$. Fix $i \in [t]$, and observe that, by the pigeonhole principle, there are at least $k \binom{n}{n/2}$ elements of $\F_i$ whose sizes have the same remainder modulo $3$. Let $\F_i'$ be a collection of $\big( k - o(1) \big) \binom{n}{n/2}$ such elements, all of size at least $n/3$, and note that $\binom{|F|}{|G|} \ge \binom{n/3}{3}$ for every $F,G \in \F_i'$ with  $F \supsetneq G$. Thus, applying Theorem~\ref{thm:supersaturation} with $m = \binom{n/3}{3}$, it follows that there exists a hypergraph $\mathcal{H}_i \subset \G_k[\F_i']$ and a constant $\delta = \delta(k) > 0$ such that $e(\HH_i) = \big\lceil \delta^{k} m^{k-1} \binom{n}{n/2} \big\rceil$ and
   $$\Delta_\ell(\mathcal{H}_i) \le (\delta m)^{k-\ell} \le \frac{c'}{n^{3\ell - 3}} \cdot \frac{\delta^{k} m^{k-1} \binom{n}{n/2}}{k \binom{n}{n/2}} \le \frac{c'}{n^{3\ell - 3}} \cdot \frac{e(\mathcal{H}_i)}{|\F_i'|} $$
for some $c' = c'(k)$ and every $1 \le \ell \le k$. Setting $\HH = \HH_1 \cup \cdots \cup \HH_t$, it follows that
   $$\Delta_\ell(\mathcal{H}) \,\le\, \max_{1 \le i \le t} \big\{ \Delta_\ell(\mathcal{H}_i) \big\} \, \le \, \frac{c'}{n^{3\ell - 3}} \cdot \frac{\max_i e(\mathcal{H}_i)}{ \min_i |\F_i'|} \,\le\, \frac{c}{n^{3\ell - 3}} \cdot \frac{e(\mathcal{H})}{|\F|}$$
as claimed, since $e(\HH) = \sum_{i=1}^t e(\HH_i) = t \cdot e(\HH_i)$ and $|\F| \le 7t \cdot |\F'_i|$ for every $i \in [t]$.
 \end{proof}

  Motivated by the above bounds, fix $\tau \colon \mathcal{P}(n) \to \mathbb{R}$ to be the function defined by
      \begin{equation}\label{def:tau}
        \tau(A) := \begin{cases}n^{-1} &\text{if }|A| \le 3k \binom{n}{n/2}\\
                                n^{-3} &\text{otherwise.}\end{cases}
      \end{equation}
We can now specialize the Hypergraph Container Lemma to our application by combining it with Lemma~\ref{cor:smallF} and Lemma~\ref{cor:bigF}. The following corollary will be used in Section~\ref{sec:putting_it_together} to count the containers of a given size produced by repeated applications of the Hypergraph Container Lemma, see Theorem~\ref{thm:container_application}.

  \begin{cor}\label{cor:specialized_hcl}
    For every $2 \leq k \in \mathbb{N}$ and $\alpha > 0$, there exists $\delta = \delta(\alpha,k) > 0$ such that the following holds. Let $n \in \mathbb{N}$ be sufficiently large and $C \subset \mathcal{P}(n)$ with $|C| \ge (k-1+\alpha)\binomn$. Then there exists a collection $\mathcal{C} \subset \mathcal{P}(C)$ and a function $f \colon \mathcal{P}(C) \to \mathcal{C}$ such that
     \begin{enumerate}
        \item[$(a)$] For every $I \in \mathcal{I}(\mathcal{G}_k[C])$, there exists $T$ with $|T| \le k \cdot \tau(C) |C|$ and $T \subset I \subset f(T) \cup T$.\smallskip
        \item[$(b)$] $|C'| \le (1-\delta)|C|$ for every $C' \in \mathcal{C}$.
     \end{enumerate}
   \end{cor}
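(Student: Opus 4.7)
The statement is essentially a packaging of the Hypergraph Container Lemma with the bounds from Lemmas~\ref{cor:smallF} and~\ref{cor:bigF}, so the plan is to feed the appropriate lemma into HCL in two cases, according to the definition~\eqref{def:tau} of $\tau(C)$.

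The plan is as follows. Given $C \subset \P(n)$ with $|C| \ge (k-1+\alpha)\binomn$, first split into two cases. In the \emph{small} case $(k-1+\alpha)\binomn \le |C| \le 3k\binomn$, set $\tau := \tau(C) = 1/n$ and apply Lemma~\ref{cor:smallF} to $\F = C$ to obtain $\HH \subset \G_k[C]$ with
\[
  \Delta_\ell(\HH) \,\le\, \frac{c}{n^{\ell-1}} \cdot \frac{e(\HH)}{|C|} \,=\, c \cdot \tau^{\ell-1} \cdot \frac{e(\HH)}{|C|}
\]
for every $1 \le \ell \le k$, where $c = c(\alpha,k)$. In the \emph{large} case $|C| \ge 3k\binomn$, set $\tau := \tau(C) = 1/n^3$ and apply Lemma~\ref{cor:bigF} to $\F = C$ to obtain $\HH \subset \G_k[C]$ with
\[
  \Delta_\ell(\HH) \,\le\, \frac{c}{n^{3\ell - 3}} \cdot \frac{e(\HH)}{|C|} \,=\, c \cdot \tau^{\ell-1} \cdot \frac{e(\HH)}{|C|}
\]
for every $1 \le \ell \le k$, where $c = c(k)$. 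In either case $\HH$ satisfies the hypothesis~\eqref{eq:containers:condition} of the Hypergraph Container Lemma with $N = |C|$, constant $c$, and the above value of $\tau$.

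Now apply the Hypergraph Container Lemma to $\HH$ with these parameters. It yields some $\delta_0 = \delta_0(c,k) > 0$, a collection $\C \subset \P(C)$ and a function $f \colon \P(C) \to \C$ such that every $I \in \I(\HH)$ admits some $T \subset I$ with $|T| \le k \cdot \tau N = k \cdot \tau(C) |C|$ and $I \subset f(T) \cup T$, while $|C'| \le (1-\delta_0)|C|$ for every $C' \in \C$. Since $\HH \subset \G_k[C]$, every independent set of $\G_k[C]$ is in particular independent in $\HH$, so property~$(a)$ transfers verbatim to $\I(\G_k[C])$. Taking $\delta := \delta_0$ (which depends on $k$ and $\alpha$ through the constant $c$ in the small case, and only on $k$ in the large case, so $\delta$ may be taken to depend on $\alpha$ and $k$) gives property~$(b)$.

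There is no real obstacle here: the two lemmas were engineered precisely so that the supersaturation bounds match the HCL hypothesis after substituting the two values of $\tau$ from~\eqref{def:tau}, and the only bookkeeping task is to observe that the exponents $\ell - 1$ and $3(\ell - 1)$ coming out of Lemmas~\ref{cor:smallF} and~\ref{cor:bigF} agree with $\tau^{\ell-1}$ for $\tau = 1/n$ and $\tau = 1/n^3$ respectively. The final $\delta$ in the corollary is then just the minimum of the two $\delta_0$ values produced by HCL in the two cases.
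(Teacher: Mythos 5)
Your proposal is correct and follows the same route as the paper: split into the two cases of~\eqref{def:tau}, apply Lemma~\ref{cor:smallF} or Lemma~\ref{cor:bigF} to produce $\HH \subset \G_k[C]$ satisfying~\eqref{eq:containers:condition} with $\tau = \tau(C)$, and then invoke the Hypergraph Container Lemma. You also correctly flag the (small but necessary) point that $\I(\G_k[C]) \subset \I(\HH)$ since $\HH$ is a subhypergraph, which the paper leaves implicit.
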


   \begin{proof}
   Apply the Hypergraph Container Lemma to the hypergraph $\HH \subset \G_k[C]$ given by Lemma~\ref{cor:smallF} (if $|C| \le 3k \binom{n}{n/2}$), or by Lemma~\ref{cor:bigF} (otherwise), and observe that (for a suitable choice of the constant $c$) the inequality~\eqref{eq:containers:condition} holds with $\tau = \tau(C)$ for every $1 \le \ell \le k$. It follows immediately that there exist a family $\mathcal{C}$ of subsets of $C$, and a function $f \colon \P(C) \to \mathcal{C}$ such that~$(a)$ and~$(b)$ hold, as required.
   \end{proof}

  \section{Balanced supersaturation}\label{sec:supersaturation}

 In this section, we will prove Theorem~\ref{thm:supersaturation} by constructing $\mathcal{H}$ one edge at a time. More precisely, starting with $\mathcal{H} = \emptyset$, we will repeatedly apply the following lemma, adding new edges to $\mathcal{H}$ until the conditions of Theorem~\ref{thm:supersaturation} are satisfied.

  \begin{lemma}\label{lemma:supersaturation}
    For every $k \ge 2$ and $\alpha > 0$, there exists $\delta = \delta(\alpha, k) > 0$ such that the following holds. Let $n \in \mathbb{N}$ and $\mathcal{F} \subset \mathcal{P}(n)$ satisfy $|\mathcal{F}| \ge (k-1+\alpha)\binom{n}{n/2}$, and suppose that $\delta^{-1} \le m \le \binom{|F|}{|G|}$ for every $F,G \in \F$ with  $F \supsetneq G$. If $\mathcal{H} \subset \G_k[\mathcal{F}]$ is a hypergraph satisfying
    \begin{enumerate}
      \item[$(a)$] $e(\mathcal{H}) \le \delta^k m^{k-1} \binom{n}{n/2}$,
      \item[$(b)$] $\Delta_\ell(\mathcal{H}) \le (\delta m)^{k - \ell}$ \quad for every $\ell \in [k]$,
    \end{enumerate}
    then there exists an edge $f \in \G_k[\mathcal{F}] \setminus \mathcal{H}$ for which $\Delta_\ell(\{f\} \cup \mathcal{H}) \le (\delta m)^{k - \ell}$ for every $\ell \in [k]$.
  \end{lemma}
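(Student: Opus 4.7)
The plan is a first-moment argument: count the $k$-chains $f \in E(\G_k[\F]) \setminus \HH$ whose addition to $\HH$ would violate some $\Delta_\ell$ constraint, and show this count is strictly less than $e(\G_k[\F]) - e(\HH)$, so that a legal $f$ must exist. Call an $\ell$-subset $\A$ of $\F$ \emph{saturated} if $d_\HH(\A) \ge (\delta m)^{k-\ell}$; then an edge $f$ is \emph{bad} precisely when it contains a saturated $\ell$-subset for some $1 \le \ell \le k$.

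For the lower bound on $e(\G_k[\F])$, I would invoke Lemma~\ref{lemma:dgs} (the Das--Gan--Sudakov style permutation supersaturation alluded to earlier): combined with $|\F| \ge (k-1+\alpha)\binom{n}{n/2}$ and the chain-room hypothesis $m \le \binom{|F|}{|G|}$, this should give $e(\G_k[\F]) \ge C_1(\alpha,k)\,m^{k-1}\binomn$, the point being that the binomial assumption is exactly what converts each favourable permutation (one carrying at least $k$ members of $\F$) into $\Theta(m^{k-1})$ distinct $k$-chains. For the upper bound on bad edges, a standard double-count using hypothesis~(a) bounds the number of saturated $\ell$-subsets by
$$\frac{\binom{k}{\ell} e(\HH)}{(\delta m)^{k-\ell}} \;\le\; \binom{k}{\ell}\delta^\ell\, m^{\ell-1}\binomn.$$
The key geometric step is then the pointwise extension estimate
$$\#\big\{f \in E(\G_k[\F]) : \A \subset f\big\} \;\le\; C_2(k)\, m^{k-\ell}$$
for every $\ell$-chain $\A \subset \F$, which I would prove by splitting a putative $k$-chain extension according to how many of the $k-\ell$ new sets fall into each of the $\ell+1$ gaps of $\A$ and bounding the number of partial chains in the Boolean interval $[A_{i+1},A_i]$ using the chain-room hypothesis together with a Stirling-type estimate on $\binom{|A_i|-|A_{i+1}|}{\cdot}$. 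Multiplying and summing over $\ell$,
$$\#\{\text{bad edges}\} \;\le\; \sum_{\ell=1}^{k}\binom{k}{\ell}\delta^\ell m^{\ell-1}\binomn \cdot C_2 m^{k-\ell} \;\le\; C_3(k)\,\delta\, m^{k-1}\binomn,$$
which, together with $e(\HH) \le \delta^k m^{k-1}\binomn$ from hypothesis~(a), is strictly less than $C_1\, m^{k-1}\binomn$ provided $\delta$ is chosen sufficiently small in terms of $\alpha$ and $k$. Hence a non-bad edge $f \in E(\G_k[\F]) \setminus \HH$ exists, proving the lemma.

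The obstacle I foresee is the pointwise extension bound: converting the single-binomial inequality $m \le \binom{|F|}{|G|}$ into the multiplicative bound $C_2\,m^{k-\ell}$ on $k$-chain extensions of a fixed $\ell$-chain requires a careful convexity argument distributing the $k-\ell$ extra sets optimally among the $\ell+1$ gaps of $\A$, and may itself reuse Lemma~\ref{lemma:dgs} inside each gap where $|A_i|-|A_{i+1}|$ is large. If a pointwise bound turns out to be too strong, the fallback is to replace it with an average bound, weighting each saturated $\ell$-subset by its actual number of $k$-chain extensions and using the identity $\sum_\A \#\{f \supset \A\} = \binom{k}{\ell}\, e(\G_k[\F])$ together with an upper bound on the concentration of extensions at saturated $\ell$-sets.
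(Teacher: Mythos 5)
Your plan is a raw first-moment argument, but the paper proves the lemma via a weighted ``chain density'' argument, and the difference matters: the raw count of extensions of a fixed $\ell$-chain to $k$-chains is \emph{not} bounded by $C_2(k)\,m^{k-\ell}$, and without that pointwise bound the first-moment computation collapses. To see the failure concretely, take $\F$ to consist of the $k$ middle layers of $\P(n)$, so that $|\F|\ge(k-1+\alpha)\binomn$ and $\binom{|F|}{|G|}\ge n/3$ for all $F\supsetneq G$ in $\F$; the hypotheses allow $m$ to be as small as $\delta^{-1}$, a constant. Then for $\ell=1$ and $\A=\{F_1\}$ with $F_1$ in the top layer, the number of $k$-chain extensions is $\Theta\big((n/2)^{k-1}\big)$, while your claimed bound $C_2(k)\,m^{k-1}$ is a constant --- off by a polynomial factor in $n$. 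The obstacle you flag at the end is therefore fatal, not a technicality, and the proposed fallback (averaging over saturated $\ell$-sets) does not close the gap, because nothing prevents the saturated $\ell$-chains from being precisely the ones carrying the most extensions; your identity $\sum_{\A}\#\{f\supset\A\}=\binom{k}{\ell}e(\G_k[\F])$ controls only the total, not the concentration at the bad sets.

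The paper sidesteps the whole issue by never counting raw edges. It weights each $k$-chain $F_1\supsetneq\cdots\supsetneq F_k$ by $\prod_{i}\binom{|F_{i-1}|}{|F_i|}^{-1}$, working with the resulting $\ell$-chain densities $\c_\ell(F_1)$, which stay $O_k(1)$ by Lemma~\ref{lemma:dgs} and Lemma~\ref{lemma:density_almost_monotone}. Crucially, instead of summing over all of $\F$, it fixes a single $F_1$ of \emph{minimal cardinality} with $\c_k(F_1)\ge\alpha/k$ (which exists by Lemma~\ref{lemma:big_max_density}), and shows that the density of bad chains rooted at this particular $F_1$ is at most $\tfrac12\c_k(F_1)$. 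This is done by decomposing bad chains according to the unique index $\ell$ at which they first become bad (``critical'' $\ell$-chains), using Lemma~\ref{lemma:bad_density_penalty} together with the chain-room hypothesis $m\le\binom{|F|}{|G|}$ to bound the density of critical chains (Lemma~\ref{lemma:critical_chains}), and then exploiting the minimality of $F_1$ plus Lemma~\ref{lemma:density_almost_monotone} to bound $\c_{k-\ell}(F_{\ell+1})$ by a constant. Your first-moment approach has no analogue of either the density normalisation or the minimal-$F_1$ selection, both of which are load-bearing; as written, it does not yield the lemma.
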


  We remark that this approach to proving balanced supersaturation theorems -- adding the edges of $\HH$ one by one -- was also used in~\cite{2013arXiv1309.2927M}, and is likely to have further applications.

  The rest of this section will be dedicated to proving the Lemma~\ref{lemma:supersaturation}, so from now on let us fix $\alpha > 0$ and $k \ge 2$, and choose $\delta > 0$ sufficiently small and $m \ge \delta^{-1}$. Moreover, let us fix  $n \in \mathbb{N}$, a family $\mathcal{F} \subset \mathcal{P}(n)$ and a hypergraph $\mathcal{H} \subset \G_k[\mathcal{F}]$ satisfying the conditions of the lemma. The degree function of $\mathcal{H}$ will simply be denoted by $d$, for simplicity.

  We say that a non-empty family $\mathcal{A} \subset \mathcal{P}(n)$ is \textit{saturated} if $d(\mathcal{A}) = \lfloor (\delta m)^{k - |\mathcal{A}|} \rfloor$, that is, if no edge of $\mathcal{F}$ containing this family can be added to the hypergraph $\mathcal{H}$ without violating condition~$(b)$. A family $\mathcal{B} \subset \mathcal{P}(n)$ is \textit{bad} if it contains a saturated sub-family $\A \subset \B$. Otherwise we say that $\B$ is \textit{good}. With this terminology, the conclusion of Lemma~\ref{lemma:supersaturation} is that $\G_k[\mathcal{F}]$ contains a good edge. Indeed, since a good edge $f \in \G_k[\F]$ is not saturated, then $d(f) < 1$, and so $f \not\in \HH$.

   The following easy lemma will be a crucial tool in the proof of Lemma~\ref{lemma:supersaturation}. It says that there are not too many ways to turn a good family bad.

  \begin{lemma}\label{lemma:bad_density_penalty}
    For any good $\mathcal{A} \subset \mathcal{P}(n)$, there are at most $2^{|\mathcal{A}|} \cdot 2 \delta k m$ sets $F \in \mathcal{P}(n)$ for which $\{F\} \cup \mathcal{A}$ is bad and $\{F\}$ is not saturated.
  \end{lemma}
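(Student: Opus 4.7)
The plan is to bound the quantity by a union bound over the ``witnesses'' of badness. The key observation is that if $\{F\} \cup \A$ is bad, it contains a saturated sub-family $\A'$; since $\A$ itself is good, we must have $F \in \A'$, and so $\A' = \{F\} \cup \A''$ for some $\A'' \subseteq \A$ with $F \notin \A''$. The hypothesis that $\{F\}$ is not saturated rules out $\A'' = \emptyset$. Hence it suffices to bound, for each non-empty $\A'' \subseteq \A$, the number of $F \in \P(n) \setminus \A''$ for which $\{F\} \cup \A''$ is saturated, and then sum over the at most $2^{|\A|}$ choices of $\A''$.

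Fix such an $\A''$ and let $\ell := |\A''| \ge 1$. Sub-families of a good family are themselves good (the non-existence of saturated sub-families is inherited downwards), so $\A''$ is not saturated, giving $d(\A'') \le (\delta m)^{k-\ell} - 1$. The essential tool is the degree identity
\begin{equation*}
  \sum_{F \notin \A''} d\big(\{F\} \cup \A''\big) \,=\, (k - \ell)\, d(\A''),
\end{equation*}
which holds because each edge of $\HH$ containing $\A''$ has exactly $k - \ell$ vertices outside $\A''$. Every saturated $\{F\} \cup \A''$ contributes at least $\lfloor (\delta m)^{k-\ell-1} \rfloor \ge (\delta m)^{k-\ell-1}/2$ to the left-hand side (using $\lfloor x \rfloor \ge x/2$ for $x \ge 1$, which is valid since $m \ge \delta^{-1}$ forces $\delta m \ge 1$), so the number of such $F$ is at most $2(k - \ell)\delta m \le 2k\delta m$. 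Summing over the $\le 2^{|\A|}$ choices of $\A''$ then yields the claimed bound $2^{|\A|} \cdot 2\delta k m$.

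The one point I would have to be slightly careful about is the boundary case $\ell = k - 1$, where $\lfloor (\delta m)^0 \rfloor = 1$ rather than something fractional. In that case the estimate specializes directly to $(k - \ell)\, d(\A'') \le \delta m$, still well inside the target, so no separate treatment is needed. Apart from this bookkeeping with the floor function, I expect no essential obstacle; the heart of the argument is really just the one-step degree identity combined with the fact that the ambient family $\A$ inherits non-saturation to its sub-families.
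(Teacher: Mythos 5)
Your proof is correct and follows essentially the same route as the paper's: decompose a bad extension $\{F\}\cup\mathcal{A}$ by the (necessarily non-empty, since $\{F\}$ is unsaturated) witness $\mathcal{A}''\subset\mathcal{A}$, double-count edges through $\mathcal{A}''$ to bound the number of saturated $\{F\}\cup\mathcal{A}''$, and sum over the $\le 2^{|\mathcal{A}|}$ choices of $\mathcal{A}''$. The two small differences are cosmetic: you state the count over edges as the exact identity $\sum_{F\notin\mathcal{A}''} d(\{F\}\cup\mathcal{A}'') = (k-\ell)\,d(\mathcal{A}'')$ rather than the paper's cruder inequality $\sum_{F\in S(\mathcal{B})} d(\{F\}\cup\mathcal{B})\le k\,d(\mathcal{B})$, and you make explicit the floor estimate $\lfloor x\rfloor\ge x/2$ for $x\ge 1$ which the paper absorbs into the phrase ``since $m\ge\delta^{-1}$''. (Your appeal to the fact that sub-families of good families are good is not actually needed: condition~$(b)$ on $\Delta_\ell(\mathcal{H})$ already gives $d(\mathcal{A}'')\le(\delta m)^{k-\ell}$, which is all you use.)
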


  \begin{proof}
    The result follows from a simple double-counting argument, which we spell out below. Since $\mathcal{A}$ is good, any saturated sub-family of $\{F\} \cup \mathcal{A}$ must contain $F$. In other words, any $F$ such that $\{F\} \cup \mathcal{A}$ is bad belongs to
    \begin{equation*}
      S(\mathcal{B}) := \big\{ F \in \mathcal{P}(n) \,:\, d\big( \{F\} \cup \mathcal{B} \big) = \big\lfloor (\delta m)^{k - |\mathcal{B}| - 1} \big\rfloor \big\}
    \end{equation*}
    for some $\mathcal{B} \subset \mathcal{A}$. Moreover, if $\{F\}$ is not saturated, then $\mathcal{B}$ cannot be empty. Therefore, it is enough to bound the size of $S(\mathcal{B})$ when $\mathcal{B}$ is non-empty. We do so by noting that
    \begin{equation*}
      |S(\mathcal{B})| \big\lfloor (\delta m)^{k-|\mathcal{B}|-1} \big\rfloor = \sum_{F \in S(\mathcal{B})} d\big( \{F\} \cup \mathcal{B} \big) \, \le \, kd(\mathcal{B}) \, \le \, k (\delta m)^{k - |\mathcal{B}|},
    \end{equation*}
    where the first inequality is true because each edge of $\mathcal{H}$ containing $\mathcal{B}$ contributes at most $k$ to the sum. Since $m \ge \delta^{-1}$, we obtain $|S(\mathcal{B})| \le 2\delta k m$. The claimed bound now follows by summing over all choices of $\mathcal{B}$.
  \end{proof}

  Similarly, noting that $S(\emptyset) = \big\{ F \in \P(n) : \{F\} \textup{ is saturated} \big\}$, we have
  $$|S(\emptyset)| \lfloor (\delta m)^{k-1} \rfloor \le k \cdot e(\mathcal{H}).$$
  By condition $(a)$ and the bound $m \ge \delta^{-1}$, it follows that $|S(\emptyset)| \le 2\delta k \binom{n}{n/2}$. Thus, by adjusting $\alpha$ slightly if necessary, we can remove the elements of $S$ from $\F$. Therefore, from now on we will assume that $\mathcal{F}$ contains no saturated sets.

  We will next sketch the proof of Lemma~\ref{lemma:supersaturation}. The key idea is that if we choose $F_1$ to be of \textit{minimal cardinality} such that the ``density'' of $k$-chains below $F_1$ (see Definition~\ref{defn:chain_density}) is bigger than $\alpha/k$ (see Lemma~\ref{lemma:big_max_density}), then only few of those $k$-chains will be bad, and hence at least one of them will be good. In order to bound the density of bad $k$-chains below $F_1$, let us define a chain $F_1 \supsetneq \cdots \supsetneq F_\ell$ to be $\textit{critical}$ if $\{F_1, \ldots, F_{\ell-1}\}$ is good but $\{F_1, \ldots, F_{\ell}\}$ is not. We will use Lemma~\ref{lemma:bad_density_penalty} to show that the density of critical $\ell$-chains is small (see Lemma~\ref{lemma:critical_chains}). We will then use the minimality of $F_1$ to deduce that the operation of extending critical $\ell$-chains to bad $k$-chains only increases the density by a bounded factor.

In order to make the above sketch more precise, let us next formalize the notion of density that we will use. This definition is inspired by the work of Das, Gan and Sudakov~\cite{2013arXiv1302.5210D}, see Lemma~\ref{lemma:dgs} below. We remark that, despite its name, the $\ell$-chain density of a set is not bounded above by 1, and in fact can be as large as $\Omega(n^{\ell-1})$.

  \begin{defn}\label{defn:chain_density}
    The $\ell$-chain density of a set $F_1 \in \mathcal{F}$, denoted by $\c_\ell(F_1)$, is given by
    \begin{equation*}
      \c_\ell(F_1) := \sum_{\substack{F_2, \ldots, F_\ell \in \mathcal{F} \\ F_1 \supsetneq F_2 \supsetneq \cdots \supsetneq F_\ell}} \binom{|F_1|}{|F_2|}^{-1}\cdots\binom{|F_{\ell-1}|}{|F_\ell|}^{-1}
    \end{equation*}
    In particular, $\c_1(F) = 1$ for all $F \in \mathcal{F}$.
  \end{defn}

    The following lemma is essentially due to Das, Gan and Sudakov~\cite{2013arXiv1302.5210D}. Since it was not explicitly stated in their paper, we will give the proof for completeness.

  \begin{lemma}[Das, Gan and Sudakov]\label{lemma:dgs}
    For any fixed $1 \leq i < j \le k$, we have
    \begin{equation*}
      \sum_{F \in \mathcal{F}} \frac{1}{\binom{n}{|F|}} \big( \c_i(F) - \c_j(F) \big) \le \max_{s \in \mathbb{N}} \binom{s}{i} - \binom{s}{j}.
    \end{equation*}
  \end{lemma}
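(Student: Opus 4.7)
The plan is to interpret the left-hand side probabilistically, via the classical LYMB permutation method. The key observation is that, for any chain $F_1 \supsetneq F_2 \supsetneq \cdots \supsetneq F_\ell$ of subsets of $[n]$, the probability that a uniformly random permutation $\pi$ of $[n]$ satisfies $F_r = \{\pi(1), \ldots, \pi(|F_r|)\}$ for every $1 \le r \le \ell$ equals
$$\frac{|F_\ell|! \cdot \prod_{r=1}^{\ell-1}(|F_r|-|F_{r+1}|)! \cdot (n-|F_1|)!}{n!} \,=\, \binom{n}{|F_1|}^{-1} \prod_{r=1}^{\ell-1} \binom{|F_r|}{|F_{r+1}|}^{-1},$$
which is exactly the weight appearing in $\c_\ell(F_1)/\binom{n}{|F_1|}$.

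Next I would define the random variable $X(\pi) := |\{ r \in \{0, 1, \ldots, n\} : \{\pi(1), \ldots, \pi(r)\} \in \F \}|$, the number of initial segments of $\pi$ that lie in $\F$. Summing the probability identity above over all $\ell$-chains with entries in $\F$ gives
$$\sum_{F \in \F} \frac{\c_\ell(F)}{\binom{n}{|F|}} \,=\, \mathbb{E}\left[\binom{X(\pi)}{\ell}\right],$$
since the $\ell$-chains of $\F$ that appear as nested prefixes of $\pi$ are in bijection with the $\ell$-element subsets of the set of prefix-sizes witnessing $X(\pi)$.

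Taking the difference of this identity between $i$ and $j$ then yields
$$\sum_{F \in \F} \frac{1}{\binom{n}{|F|}} \big( \c_i(F) - \c_j(F) \big) \,=\, \mathbb{E}\left[\binom{X(\pi)}{i} - \binom{X(\pi)}{j}\right] \,\le\, \max_{s \in \mathbb{N}} \left( \binom{s}{i} - \binom{s}{j} \right),$$
using only that the expectation of an integer-valued random variable is bounded by its pointwise maximum.

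The only real subtlety is choosing the right probabilistic model and correctly matching the multinomial probability of a chain being a prefix of $\pi$ with the product of reciprocals of binomial coefficients in Definition~\ref{defn:chain_density}; once this correspondence is in place, the rest is direct counting. I do not anticipate a genuine obstacle, as this is essentially the Katona/LYMB double-counting argument adapted to $\ell$-chains along the lines of Das, Gan and Sudakov~\cite{2013arXiv1302.5210D}.
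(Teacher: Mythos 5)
Your proof is correct and is essentially the same as the paper's: both use the LYMB permutation method, computing the probability that a given chain appears as nested prefixes of a random permutation, and then observing that the number of $\ell$-chains appearing in $\pi$ equals $\binom{s}{\ell}$ where $s$ is the number of elements of $\F$ contained in $\pi$. The only cosmetic difference is that you track $X(\pi)=s$ directly and write $\binom{X(\pi)}{\ell}$, while the paper names the chain-count $X_\ell(\pi)$ and then identifies it with $\binom{s}{\ell}$.
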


  \begin{proof}
    Following the permutation method, say a permutation $\pi$ of $[n]$ \textit{contains} a set $F$ if $F = \{\pi(1), \ldots, \pi(|F|)\}$. Moreover, say it contains a chain if it contains all sets of the chain. Note that the number of permutations containing a given chain $F_1 \supsetneq \cdots \supsetneq F_\ell$ is
    \begin{equation*}
      (n - |F_1|)! \cdot |F_1 \setminus F_2|! \cdots |F_{\ell-1} \setminus F_\ell|! \cdot |F_\ell|! = n! \cdot \binom{n}{|F_1|}^{-1} \binom{|F_1|}{|F_2|}^{-1} \ldots \binom{|F_{\ell-1}|}{|F_\ell|}^{-1},
    \end{equation*}
    and so, denoting by $X_\ell(\pi)$ the number of $\ell$-chains contained in $\pi$, the expected value of $X_\ell$ with respect to the uniform probability measure on the set of permutations is
    \begin{equation*}
      \mathbb{E}(X_\ell) = \sum_{\substack{F_1, \ldots, F_\ell \in \mathcal{F} \\ F_1 \supsetneq \ldots \supsetneq F_\ell}} \binom{n}{|F_1|}^{-1} \binom{|F_1|}{|F_2|}^{-1} \ldots \binom{|F_{\ell-1}|}{|F_\ell|}^{-1} = \sum_{F_1 \in \mathcal{F}} \left. \c_\ell(F_1) \middle/ \binom{n}{|F_1|}\right. .
    \end{equation*}
    On the other hand, since sets contained in a single permutation always form a chain, $X_\ell(\pi)$ equals $\binom{s}{\ell}$, where $s$ is the number of elements of $\mathcal{F}$ contained in $\pi$. We deduce that
    \begin{equation*}
      X_i(\pi) - X_j(\pi) \le \max_{s \in \mathbb{N}} \binom{s}{i} - \binom{s}{j},
    \end{equation*}
    and the conclusion follows by taking the expected value of both sides.
  \end{proof}

  A very useful feature of Lemma~\ref{lemma:dgs} is that the upper bound it provides does not depend on $n$. We will next use this to show that $\ell$-chain densities cannot decrease too quickly as a function of $\ell$, and hence that it is enough to upper bound the $k$-chain density of a set whenever we want an upper bound for all of its lower densities.

  %\footnote{Note that $\c_k([n]) = \Theta(n^{k-1})$ for $\mathcal{F} = \mathcal{P}(n)$, so chain density can \textit{increase} arbitrarily quickly as a function of $k$ for large enough $n$. This is not a problem for us, as we want control in the other direction.}

  \begin{lemma}\label{lemma:density_almost_monotone}
    For every $F \in \mathcal{F}$ and $1 \leq \ell < k$, we have $\c_\ell(F) \le \c_k(F) + 4^k$.
  \end{lemma}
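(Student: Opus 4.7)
The plan is to rerun the permutation argument of Das, Gan and Sudakov, but applied \emph{locally} to the set $F$ rather than globally to $[n]$; this should give a pointwise comparison between $\c_\ell(F)$ and $\c_k(F)$ rather than the averaged one stated in Lemma~\ref{lemma:dgs}. Concretely, I would fix $F \in \F$, consider a uniformly random bijection $\pi \colon [|F|] \to F$, and say that $G \subseteq F$ is \emph{contained in} $\pi$ if $G = \{\pi(1), \ldots, \pi(|G|)\}$.

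The same calculation used in the proof of Lemma~\ref{lemma:dgs} shows that the number of such $\pi$ containing a prescribed chain $F \supsetneq F_2 \supsetneq \cdots \supsetneq F_\ell$ is exactly $|F|! \cdot \binom{|F|}{|F_2|}^{-1} \cdots \binom{|F_{\ell-1}|}{|F_\ell|}^{-1}$. Hence, if $X_\ell(\pi)$ counts the $\ell$-chains $F \supsetneq F_2 \supsetneq \cdots \supsetneq F_\ell$ of $\F$ whose tails $F_2, \ldots, F_\ell$ are all contained in $\pi$, then $\mathbb{E}[X_\ell] = \c_\ell(F)$ directly from the definition. Moreover, any two members of $\F$ contained in $\pi$ are nested, so the proper subsets of $F$ in $\F$ contained in $\pi$ form a chain; writing $s(\pi)$ for their number yields $X_\ell(\pi) = \binom{s(\pi)}{\ell-1}$.

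After taking expectations, the lemma reduces to the pointwise inequality $\binom{s}{\ell-1} \le \binom{s}{k-1} + 4^k$ for every $s \in \mathbb{N}$. For $s \ge 2k-2$ we have $\ell - 1 < k - 1 \le s/2$, so unimodality of $\binom{s}{\cdot}$ gives $\binom{s}{\ell-1} \le \binom{s}{k-1}$; for $s < 2k-2$ we simply estimate $\binom{s}{\ell-1} \le 2^s \le 2^{2k-3} \le 4^k$.

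I do not foresee any real obstacle: the argument of Lemma~\ref{lemma:dgs} already holds pointwise on each permutation, so localising it to permutations of $F$ is immediate. The only things to watch are the index shift (the chain is anchored at $F_1 = F$, so the relevant binomial index is $\ell-1$ rather than $\ell$) and the small case split above.
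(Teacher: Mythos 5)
Your proof is correct and is essentially the paper's argument: the paper also reduces to the hypercube of subsets of $F$, writing $\c_\ell(F)=\sum_{F\supsetneq F_2\in\F}\c_{\ell-1}(F_2)/\binom{|F|}{|F_2|}$ and then invoking Lemma~\ref{lemma:dgs} applied to that sub-hypercube, which is exactly the localized permutation calculation you write out directly. The only difference is presentational (you re-derive the localized DGS bound rather than citing it, and you split the case analysis at $s=2k-2$ rather than $s=2k-1$), and your handling of the index shift to $\ell-1$ matches the paper's.
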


  \begin{proof}
    The result is trivial for $\ell = 1$, as $\c_1(F) = 1$. For $\ell \ge 2$, we can use the identity
    \begin{equation*}
      \c_\ell(F) = \sum_{\substack{F_2 \in \mathcal{F} \\ F \supsetneq F_2}} \binom{|F|}{|F_2|}^{-1} \sum_{\substack{F_3,\ldots,F_k \in \mathcal{F} \\ F_2 \supsetneq \cdots \supsetneq F_\ell}} \binom{|F_2|}{|F_3|}^{-1} \cdots \binom{|F_{\ell-1}|}{|F_\ell|}^{-1} = \sum_{F \supsetneq F_2 \in \mathcal{F}} \left. \c_{\ell-1}(F_2) \middle/ \binom{|F|}{|F_2|} \right.
    \end{equation*}
    together with Lemma~\ref{lemma:dgs} (applied to the hypercube of subsets of $F$) to obtain%\footnote{Formally, use permutations of $F$ instead of permutations of $[n]$ in the proof of \Cref{lemma:dgs}.}
    \begin{equation*}
      \c_\ell(F) - \c_k(F) = \sum_{F \supsetneq F_2 \in \mathcal{F}} \frac{1}{\binom{|F|}{|F_2|}} (\c_{\ell-1}(F_2) - \c_{k-1}(F_2)) \le \max_{s \in \mathbb{N}} \binom{s}{\ell-1} - \binom{s}{k-1}.
    \end{equation*}
    Since the function being maximized is negative for all $s \ge 2k-1$, the right side is at most $\binom{2k-1}{\ell-1} \le 4^k$, which proves the result.
  \end{proof}

  Lemma~\ref{lemma:dgs} also allows us to deduce that at least one element of our family has large $k$-chain density, as we show in the following pigeonhole-like observation.

  \begin{lemma}\label{lemma:big_max_density}
    If $0 \leq \alpha \leq 1$ and $|\mathcal{F}| \ge (k-1+\alpha) \binom{n}{n/2}$, then $\max_F \c_k(F) \ge \alpha/k$.
  \end{lemma}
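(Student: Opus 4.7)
The plan is to apply Lemma~\ref{lemma:dgs} in the simplest possible way, with $i = 1$ and $j = k$. Since $c_1(F) = 1$ for every $F \in \mathcal{F}$, this gives
$$\sum_{F \in \mathcal{F}} \frac{1 - c_k(F)}{\binom{n}{|F|}} \le \max_{s \in \N}\left(s - \binom{s}{k}\right).$$
A direct case check shows the right-hand side equals $k - 1$: for integer $0 \le s \le k-1$ the binomial vanishes and the maximum is $s = k-1$; at $s = k$ we again get $k - 1$; for $s \ge k+1$, $\binom{s}{k}$ overtakes $s$ and the expression becomes non-positive.

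Next I would use the trivial bound $\binom{n}{|F|} \le \binom{n}{n/2}$ to produce the LYM-type lower estimate
$$L := \sum_{F \in \mathcal{F}} \frac{1}{\binom{n}{|F|}} \,\ge\, \frac{|\mathcal{F}|}{\binom{n}{n/2}} \,\ge\, k - 1 + \alpha.$$
Rearranging the inequality above then yields
$$\sum_{F \in \mathcal{F}} \frac{c_k(F)}{\binom{n}{|F|}} \,\ge\, L - (k - 1).$$

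Finally, the left-hand side is at most $\max_{F \in \mathcal{F}} c_k(F) \cdot L$, so
$$\max_{F \in \mathcal{F}} c_k(F) \,\ge\, 1 - \frac{k-1}{L}.$$
As a function of $L$, the right-hand side is increasing; since $L \ge k-1+\alpha$, we obtain $\max_F c_k(F) \ge \alpha/(k-1+\alpha)$. The hypothesis $\alpha \le 1$ guarantees $k-1+\alpha \le k$, whence $\max_F c_k(F) \ge \alpha/k$, as required. I do not expect any serious obstacle: the only computation requiring a moment's thought is the elementary maximization of $s - \binom{s}{k}$ over $s \in \N$, after which everything is a single averaging inequality combined with the use of $\alpha \le 1$ to pass from $\alpha/(k-1+\alpha)$ to the slightly weaker $\alpha/k$.
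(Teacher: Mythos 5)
Your proof is correct and takes essentially the same route as the paper: both apply Lemma~\ref{lemma:dgs} with $i=1$, $j=k$ and then run a LYM-type averaging using the weights $1/\binom{n}{|F|}$. Your direct phrasing (avoiding the paper's argument by contradiction) is marginally cleaner and even yields the slightly stronger intermediate bound $\max_F \c_k(F) \ge \alpha/(k-1+\alpha)$ before the final weakening to $\alpha/k$ via $\alpha \le 1$.
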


  \begin{proof}
    By Lemma~\ref{lemma:dgs} with $i = 1$ and $j = k$, and since $c_1(F) = 1$, we have
    \begin{equation*}
      \sum_{F \in \mathcal{F}} \frac{1}{\binom{n}{|F|}} (1 - \c_k(F)) \le \max_{s \in \mathbb{N}} \binom{s}{1} - \binom{s}{k} = k - 1.
    \end{equation*}
    However, if the desired conclusion were not true, we would have
    \begin{equation*}
      \sum_{F \in \mathcal{F}} \frac{1}{\binom{n}{|F|}} \left( 1 - \c_k(F) \right) > \sum_{F \in \mathcal{F}} \frac{1}{\binom{n}{n/2}} \left( 1 - \frac{\alpha}{k} \right) \ge (k-1+\alpha) \cdot \frac{k-\alpha}{k} \ge k-1,
    \end{equation*}
    where, for the last step, note that equality holds when $\alpha \in \{0,1\}$.
  \end{proof}

    Finally, we will need the following lemma, which bounds the density of critical $\ell$-chains. It is a simple consequence of Lemma~\ref{lemma:bad_density_penalty} and our assumption that $m \le \binom{|F|}{|G|}$ for every $F,G \in \F$ with $F \supsetneq G$.

  \begin{lemma}\label{lemma:critical_chains}
   For every $F_1 \in \mathcal{F}$ and $1 \le \ell < k$,
    \begin{equation}\label{eq:critical}
      \sum_{\substack{F_2, \ldots, F_{\ell+1} \in \mathcal{F} \\ F_1 \supsetneq \cdots \supsetneq F_{\ell+1} \text{ critical}}} \binom{|F_1|}{|F_2|}^{-1} \cdots \binom{|F_{\ell}|}{|F_{\ell+1}|}^{-1} \leq 2^{\ell} \cdot 2\delta k \cdot \c_{\ell}(F_1)
    \end{equation}
  \end{lemma}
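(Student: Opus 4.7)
The plan is to fix the first $\ell$ sets of the chain and reduce to counting extensions. Observe that by the definition of \emph{critical}, if $F_1 \supsetneq \cdots \supsetneq F_{\ell+1}$ is critical then the truncated chain $\mathcal{A} = \{F_1, \ldots, F_\ell\}$ is good, while $\{F_{\ell+1}\} \cup \mathcal{A}$ is bad. Recall also that we have already discarded the saturated singletons from $\F$, so $\{F_{\ell+1}\}$ is automatically not saturated. Hence Lemma~\ref{lemma:bad_density_penalty}, applied to the good family $\mathcal{A}$, tells us that the number of allowable extensions $F_{\ell+1} \in \mathcal{F}$ is at most $2^{\ell} \cdot 2\delta k m$ (in fact we are only interested in those contained strictly inside $F_\ell$, which is a subset of those counted by the lemma).

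Next I would exploit the technical hypothesis $m \le \binom{|F|}{|G|}$ whenever $F \supsetneq G$ in $\F$, which yields the pointwise bound $\binom{|F_\ell|}{|F_{\ell+1}|}^{-1} \le 1/m$. Combining these two estimates, for any fixed good chain $F_1 \supsetneq \cdots \supsetneq F_\ell$ in $\F$ we get
\begin{equation*}
  \sum_{\substack{F_{\ell+1} \in \F \\ F_1 \supsetneq \cdots \supsetneq F_{\ell+1} \text{ critical}}} \binom{|F_\ell|}{|F_{\ell+1}|}^{-1} \;\le\; 2^{\ell} \cdot 2\delta k m \cdot \frac{1}{m} \;=\; 2^{\ell} \cdot 2\delta k.
\end{equation*}

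The final step is to sum this bound over all good chains $F_1 \supsetneq \cdots \supsetneq F_\ell$ starting at $F_1$, weighted by $\binom{|F_1|}{|F_2|}^{-1} \cdots \binom{|F_{\ell-1}|}{|F_\ell|}^{-1}$. Dropping the restriction that the chain be good, this weighted sum is bounded above by $\c_\ell(F_1)$ by Definition~\ref{defn:chain_density}, yielding the claimed inequality. There is no real obstacle here: the only mildly subtle points are keeping track of the fact that Lemma~\ref{lemma:bad_density_penalty} applies because $\mathcal{A}$ is good and singletons are not saturated, and remembering that we must pay the factor $\binom{|F_\ell|}{|F_{\ell+1}|}^{-1}$ only for the newly added set, which is exactly where the $m \le \binom{|F|}{|G|}$ assumption pays off.
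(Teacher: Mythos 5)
Your proof is correct and follows essentially the same route as the paper: apply Lemma~\ref{lemma:bad_density_penalty} to the good prefix $\{F_1,\ldots,F_\ell\}$ to bound the number of critical extensions $F_{\ell+1}$ by $2^\ell \cdot 2\delta k m$, then absorb the last weight $\binom{|F_\ell|}{|F_{\ell+1}|}^{-1}$ using the hypothesis $m \le \binom{|F_\ell|}{|F_{\ell+1}|}$, and finally bound the remaining weighted sum over $(F_2,\ldots,F_\ell)$ by $\c_\ell(F_1)$. The paper writes this as a single displayed chain with a $\max_{F_\ell \supsetneq F_{\ell+1}} \binom{|F_\ell|}{|F_{\ell+1}|}^{-1}$ factor, but that is only a cosmetic difference from your pointwise estimate.
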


  \begin{proof}
 Recall that if $F_1 \supsetneq \cdots \supsetneq F_{\ell+1}$ is critical, then $\{F_1, \ldots, F_\ell\}$ is good but $\{F_1, \ldots, F_{\ell+1}\}$ is not. By Lemma~\ref{lemma:bad_density_penalty}, it follows that the left-hand side of~\eqref{eq:critical} is at most
    \begin{equation*}
      \sum_{\substack{F_2, \ldots, F_{\ell} \in \mathcal{F} \\ F_1 \supsetneq \cdots \supsetneq F_{\ell}}} \binom{|F_1|}{|F_2|}^{-1} \cdots \binom{|F_{\ell-1}|}{|F_{\ell}|}^{-1} \cdot 2^{\ell} \cdot 2\delta k m \cdot \max_{F_{\ell} \supsetneq F_{\ell+1} \in \mathcal{F}} \binom{|F_{\ell}|}{|F_{\ell+1}|}^{-1}.
    \end{equation*}
    The result then follows from our upper bound on $m$ and the definition of $c_\ell(F_1)$.
  \end{proof}

  We are now ready to carry out the plan outlined above, and prove Lemma~\ref{lemma:supersaturation}.

  \begin{proof}[Proof of Lemma~\ref{lemma:supersaturation}]
  We may assume, without loss of generality, that $0 < \alpha < 1$. Let $F_1$ be of minimal cardinality such that $\c_k(F_1) \ge \alpha/k$ (note that at least one such $F_1$ exists, by Lemma~\ref{lemma:big_max_density}). We claim that
    \begin{equation}\label{eq:half_are_good}
      \sum_{\substack{F_2, \ldots, F_k \in \mathcal{F} \\ F_1 \supsetneq \cdots \supsetneq F_{k} \text{ bad}}} \binom{|F_1|}{|F_2|}^{-1} \cdots \binom{|F_{k-1}|}{|F_{k}|}^{-1} \leq \frac{\c_k(F_1)}{2},
    \end{equation}
    which immediately implies that the total $k$-chain density of \emph{good} chains is positive, and therefore that at least one good chain exists. In order to prove \eqref{eq:half_are_good}, notice that every bad $k$-chain $F_1 \supsetneq \cdots \supsetneq F_k$ is associated with a unique $1 \le \ell < k$ such that $F_1 \supsetneq \cdots \supsetneq F_{\ell+1}$ is critical. As such, we can write the left side of \eqref{eq:half_are_good} as
    \begin{equation*}
      \sum_{\ell = 1}^{k-1} \Bigg( \sum_{\substack{F_2, \ldots, F_{\ell+1} \in \mathcal{F} \\ F_1 \supsetneq \cdots \supsetneq F_{\ell+1} \text{ critical}}} \binom{|F_1|}{|F_2|}^{-1} \cdots \binom{|F_{\ell}|}{|F_{\ell+1}|}^{-1} \cdot \c_{k-\ell}(F_{\ell+1}) \Bigg).
    \end{equation*}
    We will proceed by bounding each term of the outer sum separately, so fix $1 \le \ell < k$. By Lemma~\ref{lemma:density_almost_monotone} and the minimality of $F_1$, we have $\c_{k-\ell}(F_{\ell+1}) \leq \c_k(F_{\ell+1}) + 4^k < \alpha/k + 4^k < 5^k$. Using this bound and Lemma~\ref{lemma:critical_chains}, we obtain
    \begin{equation}\label{eq:bound_for_each_ell}
      \sum_{\substack{F_2, \ldots, F_{\ell+1} \in \mathcal{F} \\ F_1 \supsetneq \cdots \supsetneq F_{\ell+1} \text{ critical}}} \binom{|F_1|}{|F_2|}^{-1} \cdots \binom{|F_{\ell}|}{|F_{\ell+1}|}^{-1} \cdot \c_{k-\ell}(F_{\ell+1}) \leq 2^{\ell} \cdot 2\delta k \cdot \c_\ell(F_1) \cdot 5^k.
    \end{equation}
    Using Lemma~\ref{lemma:density_almost_monotone} once again for the bound $\c_\ell(F_1) \le \c_k(F_1) + 4^k$ and summing \eqref{eq:bound_for_each_ell} over $1 \le \ell < k$, we conclude that
    \begin{equation*}
      \sum_{\substack{F_2, \ldots, F_k \in \mathcal{F} \\ F_1 \supsetneq \cdots \supsetneq F_{k} \text{ bad}}} \binom{|F_1|}{|F_2|}^{-1} \cdots \binom{|F_{k-1}|}{|F_{k}|}^{-1} = \delta \cdot 2^{O(k)} \cdot ( \c_k(F_1) + 4^k ) = \frac{\delta \cdot 2^{O(k)}}{\alpha} \cdot \c_k(F_1),
    \end{equation*}
since $\c_k(F_1) \ge \alpha / k$. The right side can be made less than $\c_k(F_1)/2$ by choosing $\delta$ to be small (only as a function of $\alpha$ and $k$), and so the proof is complete.
  \end{proof}

  \section{Proof of Theorem~\ref{thm:main}}\label{sec:putting_it_together}

    In this section we will deduce Theorem~\ref{thm:main} from the results of the previous two sections. More precisely, we will use Corollary~\ref{cor:specialized_hcl}
    %the Hypergraph Container Lemma and \Cref{thm:supersaturation}
    to prove a `fingerprint theorem' (Theorem~\ref{thm:container_application}, below), which easily implies Theorem~\ref{thm:main}. A \emph{coloured vertex set} is simply a family $\A \subset \mathcal{P}(n)$ together with a function $c \colon \A \to \mathbb{N}$. Recall that $\G_k$ denotes the $k$-uniform hypergraph whose edges encode $k$-chains. We will need the following definition.

    \begin{defn}\label{def:fingerprints}
      A fingerprint of $\mathcal{G}_k$ is a family $\mathcal{S}$ of coloured vertex sets, together with:
      \begin{enumerate}
        \item[$(a)$] A fingerprint function $T \colon \mathcal{I}(\mathcal{G}_k) \to \mathcal{S}$ with $T(I) \subset I$ for every $I \in \mathcal{I}(\mathcal{G}_k)$.
        \item[$(b)$] A container function $C \colon \mathcal{S} \to \mathcal{P}(V(\mathcal{G}_k))$ such that $I \subset C(T(I))$ for every $I \in \mathcal{I}(\mathcal{G}_k)$.
      \end{enumerate}
    \end{defn}

    Each $S \in \mathcal{S}$ should be thought of as a sequence of subsets of $V(\HH)$ given by repeated application of the Hypergraph Container Lemma. The container function is obtained by applying the sequence of functions $f$ given by these repeated applications. We will prove the following theorem.

    \begin{thm}\label{thm:container_application}
   For every $k \ge 2$ and $\eps > 0$, there exist a constant $K = K(\varepsilon, k) > 0$ and a fingerprint $(\mathcal{S}, T, C)$ of $\G_k$ such that the following hold:
      \begin{enumerate}
        \item[$(a)$] Every $S \in \mathcal{S}$ satisfies $|S| \leq \frac{K}{n}\binomn$; \smallskip
        \item[$(b)$] The number of members of $\mathcal{S}$ of size $s$ is at most \[\bigg(\frac{K\binomn}{s}\bigg)^s \exp\left(\frac{K}{n}\binomn\right);\]
        \item[$(c)$] $|C(T(I))| \leq (k-1+\varepsilon)\binomn$ for every $I \in \mathcal{I}(\G_k)$.
      \end{enumerate}
    \end{thm}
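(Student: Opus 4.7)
The plan is to apply Corollary~\ref{cor:specialized_hcl} iteratively, starting from the trivial container $C_0 = \P(n)$. For each $I \in \I(\G_k)$, set $C_0(I) = \P(n)$; while $|C_{i-1}(I)| > (k-1+\eps)\binomn$, apply Corollary~\ref{cor:specialized_hcl} to $C_{i-1}(I)$ to obtain a token $T_i(I) \subset I \cap C_{i-1}(I)$ satisfying $|T_i(I)| \le k\tau(C_{i-1}(I))|C_{i-1}(I)|$ and a new container $C_i(I) := f_i(T_i(I)) \cup T_i(I) \supseteq I \cap C_{i-1}(I)$ with $|C_i(I)| \le (1 - \delta/2)|C_{i-1}(I)|$ (absorbing the negligible $|T_i(I)|$ into a slightly worse $\delta$). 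Halt at the first step $M = M(I)$ with $|C_M(I)| \le (k-1+\eps)\binomn$, and define $T(I) := T_1(I) \cup \cdots \cup T_M(I)$ with each element coloured by the step that added it, and $C(T(I)) := C_M(I)$. Since the coloured fingerprint uniquely determines each $T_i$ and hence each $C_i$ (the Corollary's function $f_i$ depends only on $C_{i-1}$), the map $C$ is well-defined, and property $(c)$ holds by the halting criterion.

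To verify $(a)$, I split the iterations into a \emph{large phase}, while $|C_{i-1}(I)| > 3k\binomn$ and $\tau = n^{-3}$, and a \emph{small phase}, once $|C_{i-1}(I)| \le 3k\binomn$ and $\tau = n^{-1}$. In the large phase, $|T_i| \le k|C_{i-1}|/n^3$; summing a geometric series as $|C_{i-1}|$ shrinks from $2^n$ down to $3k\binomn$ gives a total of $O(2^n/n^3) = O(\binomn/n^{5/2})$. In the small phase, $|T_i| \le k|C_{i-1}|/n$, and the $O(\log(1/\eps)) = O(1)$ iterations contribute a total of $O(\binomn/n)$. Summing, $|T(I)| \le K\binomn/n$, as desired.

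For $(b)$, let $L$ be the number of large-phase steps, so that $L = O(\log n)$ uniformly (since the large phase reduces $|C|$ from $2^n$ to $3k\binomn \sim 2^n/\sqrt{n}$ by factors of $(1-\delta/2)$), and $M - L = O(1)$. Given a coloured fingerprint of size $s$, write $s = s_L + s_S$ according to which phase added each element. The number of large-phase substructures is at most $\binom{2^n}{s_L} L^{s_L}$; its logarithm is $O(s_L \cdot n + s_L \log\log n) = O(\binomn/n^{3/2}) = o(\binomn/n)$, which fits comfortably within the exponential factor. The number of small-phase substructures is at most $\binom{3k\binomn}{s_S} \cdot O(1)^{s_S} \le (K'\binomn/s_S)^{s_S}$ for some $K' = K'(\eps,k)$. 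Since $x \mapsto (K'\binomn/x)^x$ is increasing for $x \le K'\binomn/e$ and $s \le K\binomn/n \ll K'\binomn/e$, we have $(K'\binomn/s_S)^{s_S} \le (K'\binomn/s)^s$, and multiplying the two phase-counts gives the bound in $(b)$ after adjusting $K$.

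The principal technical obstacle is the colour bookkeeping in $(b)$, and in particular the treatment of the large phase: the number of iterations $L = O(\log n)$ grows with $n$, threatening to introduce a $L^{s_L}$ colouring factor that could spoil the count, but because $s_L = O(\binomn/n^{5/2})$ is small the naive bound $\binom{2^n}{s_L} L^{s_L}$ is still absorbed into $\exp(K\binomn/n)$. The remaining step -- using the unimodality of $x \mapsto (K'\binomn/x)^x$ to combine the two phase-counts into the form stated -- is then routine.
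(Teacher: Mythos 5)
Your overall architecture---iterate Corollary~\ref{cor:specialized_hcl}, split the run into a ``large phase'' ($\tau=n^{-3}$) and a ``small phase'' ($\tau=n^{-1}$), bound the large phase crudely because $s_L$ is tiny, and note that the small phase has $O(1)$ rounds so colouring is trivial---is essentially the same as the paper's, and your counting in $(b)$ is in fact slightly cleaner, since exploiting $M-L=O(1)$ lets you bypass the technical Lemma~\ref{lemma:slogs} entirely (which the paper acknowledges is only needed in full generality for other applications).

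However, there is a genuine gap in the well-definedness of your container map. You define the next container as $C_i := f_i(T_i)\cup T_i$, so that $I\subset C_i$ is maintained and you can take $C(T(I)) := C_M$. But because the tokens $T_1,\dots,T_{i}$ remain inside $C_i$, the next fingerprint $T_{i+1}\subset I\cap C_i$ may intersect earlier $T_j$'s---nothing in the Hypergraph Container Lemma prevents this. Then ``colour each element by the step that added it'' is ambiguous, and the coloured set $T(I)$ no longer determines the sequence $(T_1,\dots,T_M)$, so $C$ is not a function of $T(I)$ and condition~$(b)$ of Definition~\ref{def:fingerprints} fails. The standard fix, which the paper uses, is to set $C_{i+1}:=f_i(T_i)\setminus T_i$: this makes $C_{i+1}, T_1,\dots,T_i$ pairwise disjoint, so each element has a unique colour, at the cost that $I$ need no longer lie inside $C_{m+1}$ alone. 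You must therefore take $C(T(I)) := C_{m+1}\cup T_1\cup\cdots\cup T_m$, and in checking $(c)$ absorb the extra $|T(I)| = O(\binomn/n)$ elements into the $\varepsilon$ slack. Once you make this change, the rest of your argument---the phase split for $(a)$, and the $\binom{2^n}{s_L}L^{s_L}\cdot\binom{3k\binomn}{s_S}O(1)^{s_S}$ count for $(b)$ followed by the monotonicity trick---goes through.
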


    Before proving Theorem~\ref{thm:container_application}, let us see how it implies Theorem~\ref{thm:main}.
    \begin{proof}[Proof of Theorem~\ref{thm:main}]
      Let $k \ge 2$ and $\eps > 0$ be arbitrary, and let $K = K(\eps,k) > 0$ and $(\mathcal{S}, T, C)$ be the constant and fingerprint given by Theorem~\ref{thm:container_application}. Let $n \in \N$ be sufficiently large, and note that $pn \geq K\varepsilon^{-1}$, since $pn \to \infty$. If $I \subset \mathcal{P}(n, p)$ is an independent set of $\mathcal{G}_k$ of size at least $(k-1+3\varepsilon)p\binomn$, then it follows that $T(I) \subset \mathcal{P}(n, p)$ and
      $$ \big| C(T(I)) \cap \mathcal{P}(n, p) \big|  \ge \big( k - 1 + 3\varepsilon \big) p \binomn.$$
      Let $X$ be the number of elements of $\mathcal{S}$ for which these two properties hold. Then
      \begin{equation*}
        \mathbb{E}(X) \leq \sum_{A \in \mathcal{S}} \mathbb{P}\big(A \subset \mathcal{P}(n, p)\big) \cdot \mathbb{P}\left(\big|(C(A) \setminus A) \cap \mathcal{P}(n, p)\big| \geq (k-1+2\varepsilon)p\binomn\right),
      \end{equation*}
      where we used that $|A| \leq \varepsilon p\binomn$ by the lower bound on $pn$ and Theorem~\ref{thm:container_application}~$(a)$. Hence, by the properties of $(\mathcal{S}, T, C)$ guaranteed by Theorem~\ref{thm:container_application}, and Chernoff's inequality,
      \begin{align*}
        \mathbb{E}(X) &\leq \sum_{s=1}^{\frac{K}{n}\binomn} \bigg(\frac{K \binomn}{s}\bigg)^s \exp\left(\frac{K}{n}\binomn\right) \cdot p^s \cdot \exp\left(- \frac{\varepsilon^2 p}{3} \binomn\right) \\
        &\leq \frac{K}{n} \binomn \exp\left( \frac{K \log(pn)}{n} \binomn + \frac{K}{n}\binomn - \frac{\varepsilon^2 p}{3} \binomn \right),
      \end{align*}
      since the summand is increasing in $s$ on the interval $\big($$0, (Kp/e) \binomn\big)$, and $K/n \ll Kp/e$. Therefore, by Markov's inequality, and since $pn \gg \log(pn) \gg 1$, we have
      \begin{equation*}
        \mathbb{P}\bigg(\alpha\big(\mathcal{P}(n, p)\big) \geq \big( k - 1 + 3\varepsilon \big)p\binomn\bigg) \leq \exp\left(-\frac{\varepsilon^2p}{6}\binomn\right) \to 0
      \end{equation*}
      as $n \to \infty$, as required.
    \end{proof}

    It only remains to prove Theorem~\ref{thm:container_application}. We will use a straightforward but technical lemma.

    \begin{lemma}\label{lemma:slogs}
      Let $M > 0$, $s > 0$ and $0 < \delta < 1$. For any finite sequence $(a_1,\ldots,a_m)$ of real numbers summing to $s$ such that $1 \leq a_j \leq (1-\delta)^j M$ for each $j \in [m]$, we have
      \begin{equation*}
        s \log s \leq \sum_{j=1}^m a_j \log a_j + O(M).
      \end{equation*}
    \end{lemma}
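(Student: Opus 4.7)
The plan is to rewrite the desired inequality as an upper bound on the ``entropy-like'' quantity $\sum_j a_j \log(s/a_j)$ and then exploit the geometric cap $a_j \le (1-\delta)^j M$ to bound it by $O(M)$. Using $\sum_j a_j = s$, the claim $s \log s \le \sum_j a_j \log a_j + O(M)$ is equivalent to
\[
  \sum_{j=1}^{m} a_j \log \frac{s}{a_j} \;\le\; O(M),
\]
so it suffices to bound the left-hand side. Throughout I will write $c_j := (1-\delta)^j M$, so that $a_j \le c_j$ and also $s = \sum_j a_j \le \sum_{j \ge 1} c_j = M(1-\delta)/\delta = O(M)$ (with the constant depending on $\delta$).

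The key move is the decomposition $\log(s/a_j) = \log(c_j/a_j) + \log(s/c_j)$ and to bound the two resulting sums separately. For the first sum I would use that $a_j/c_j \in (0,1]$ and that the function $x \log(1/x)$ has maximum $1/e$ on $(0,1]$, so each term satisfies $a_j \log(c_j/a_j) \le c_j/e$ and the total is at most $\sum_j c_j/e = O(M)$. For the second sum, expanding $\log(s/c_j) = \log(s/M) + j \cdot |\log(1-\delta)|$ gives
\[
  \sum_j a_j \log(s/c_j) \;=\; s \log(s/M) + |\log(1-\delta)| \sum_j j\, a_j,
\]
and I would bound $s \log(s/M) = O(M)$ (either negative when $s \le M$, or at most $s \log(1/\delta) \le (M/\delta)\log(1/\delta)$ when $s > M$) and $\sum_j j\, a_j \le M \sum_{j \ge 1} j(1-\delta)^j = M(1-\delta)/\delta^2 = O(M)$, where the geometric series converges precisely because of the $\delta$ gap.

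There is no real obstacle here; the decomposition $\log(s/a_j) = \log(c_j/a_j) + \log(s/c_j)$ \emph{is} the whole trick, since it isolates the cost of $a_j$ lying strictly below its cap (controlled by concavity of $x \log(1/x)$) from the cost induced by the cap itself (controlled by geometric decay). Note that the hypothesis $a_j \ge 1$ plays no role in the argument; all of the constants absorbed into $O(M)$ depend on $\delta$, which is harmless since $\delta$ is a fixed parameter in the intended application.
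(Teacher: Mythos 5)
Your proof is correct, and it takes a genuinely different route from the paper's. The paper fixes $m$, uses a compactness argument to assume that $(a_1,\ldots,a_m)$ minimizes $\sum x_j\log x_j$ subject to the constraints, and then invokes convexity twice: once to show that the minimizer must consist of a constant ``plateau'' $a_1=\cdots=a_t$ below the cap followed by terms $a_j=(1-\delta)^jM$ exactly at the cap, and once more to split $s\log s$ into $s_1\log s_1+s_2\log s_2+O(M)$; it then bounds the two blocks separately. You instead work with an arbitrary admissible sequence directly, rewriting the claim as $\sum_j a_j\log(s/a_j)\le O(M)$ and splitting $\log(s/a_j)=\log(c_j/a_j)+\log(s/c_j)$ with $c_j=(1-\delta)^jM$. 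The first piece is controlled by the pointwise bound $x\log(1/x)\le 1/e$ (giving $\sum a_j\log(c_j/a_j)\le \sum c_j/e=O(M)$), and the second by expanding $\log(s/c_j)=\log(s/M)+j|\log(1-\delta)|$ and using that both $s$ and $\sum_j j\,c_j$ are $O_\delta(M)$ by geometric decay. Your approach is more elementary in that it dispenses with the reduction to an extremal configuration and never needs to know anything structural about the optimizer; what you lose is the structural picture the paper's argument provides, but for this lemma that picture isn't needed. Your observation that the hypothesis $a_j\ge 1$ is not really used (only $a_j>0$, to make $\log a_j$ meaningful) is also accurate, and the fact that the implied constant depends on $\delta$ matches the paper's usage.
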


    We remark that this lemma is especially easy to prove if $m = O(1)$, which will be the case in our application. However, it is not much harder to prove in general, and this more general version is necessary for other applications, cf.~\cite[Section~6]{2013arXiv1309.2927M}.

    \begin{proof}[Proof of Lemma~\ref{lemma:slogs}]
      Fix $m \in \mathbb{N}$ and note that, by compactness, we can assume that the sequence $(a_1, \ldots, a_m)$ achieves the minimum of $\sum_{j=1}^m x_j \log x_j$ subject to the given conditions. Let
      \begin{equation*}
        J_1 = \{j \in [m] : a_j < (1-\delta)^j M\}
      \end{equation*}
      and $J_2 = [m] \setminus J_1$; define also $s_i = \sum_{j \in J_i} a_j$ for $i \in \{1,2\}$. The convexity of $x \log x$ implies that all of the elements of the subsequence $(a_j)_{j \in J_1}$ are equal and that $J_1 = [t]$ for some $t \in \{0, \ldots, m\}$, so that $s_1 \leq t (1-\delta)^t M$. Note that $s = \sum_j a_j = O(M)$ and
      \begin{align*}
        s_2 \log M - \sum_{j \in J_2} a_j \log a_j = \sum_{j \in J_2} a_j \log \frac{M}{a_j} \leq \sum_{j=1}^\infty (1-\delta)^j M \log \frac{1}{(1-\delta)^j} = O(M).
      \end{align*}
      We are done if $t = 0$, so assume $t \geq 1$. By convexity, $s \log s \leq s_1 \log s_1 + s_2 \log s_2 + s \log 2$. Hence, recalling that $a_1 = \ldots = a_t = s_1/t$, we have
      \begin{align*}
        s \log s &\leq s_1 \log \frac{s_1}{t} + s_1 \log t + s_2 \log s_2 + O(M) \\
                 &\leq \sum_{j \in J_1} a_j \log a_j + t (1-\delta)^t M \log t + \sum_{j \in J_2} a_j \log a_j + O(M) \\
                 &= \sum_{j=1}^m a_j \log a_j + O(M),
      \end{align*}
    as claimed.
    \end{proof}

  We are now ready to prove the `fingerprint theorem', and thus complete the proof of Theorem~\ref{thm:main}.

    \begin{proof}[Proof of Theorem~\ref{thm:container_application}]
Let $k \ge 2$ and $\eps > 0$ be arbitrary, let $\delta = \delta(\varepsilon,k) > 0$ be given by Corollary~\ref{cor:specialized_hcl}, choose a large constant $K = K(\varepsilon, k,\delta)$, and let $n \in \mathbb{N}$ be sufficiently large. For a given $I \in \mathcal{I}(\mathcal{G}_k)$, we will apply Corollary~\ref{cor:specialized_hcl} a certain number of times, which we will denote by $m = m(I)$, to construct two sequences $C_1, \ldots, C_{m+1}$ and $T_1, \ldots, T_m$ of subsets of $V(\G_k)$. The construction will inductively maintain the following properties:
      \begin{enumerate}
         \item[$(i)$] $I \subset C_{i+1} \cup T_1 \cup \cdots \cup T_i$,
         \item[$(ii)$] The sets $C_{i+1}, T_1, \ldots, T_i$ are pairwise disjoint,
         \item[$(iii)$] $C_{i+1}$ only depends on $C_i$ and $T_i$,
         \item[$(iv)$] $|C_{i+1}| \le (1-\delta) |C_i|$.
      \end{enumerate}
      To do this, first set $C_1 := \mathcal{P}(n)$. As long as $|C_i| \geq (k-1+\varepsilon) \binomn$, let $T_i \subset I \cap C_i$ and $f_i$ be given by Corollary~\ref{cor:specialized_hcl} applied to $C_i$, and set $C_{i+1} := f_i(T_i) \setminus T_i \subset C_i \setminus T_i$. We stop when we can no longer apply Corollary~\ref{cor:specialized_hcl}, that is, when $|C_{m+1}| < (k-1+\varepsilon)\binomn$.

      We define our fingerprint $(\mathcal{S}, T, C)$ of $\G_k$ by setting
      \begin{equation*}
        T(I) := (T_1, \ldots, T_m) \qquad \text{and} \qquad C(T(I)) := C_{m+1} \cup T_1 \cup \cdots \cup T_m,
      \end{equation*}
      and letting $\mathcal{S} := \{T(I) : I \in \mathcal{I}(\mathcal{G}_k)\}$. Note that Property~$(iii)$ implies $C$ is well-defined, while Property~$(i)$ guarantees that it is a container function.

      In order to check that the constructed fingerprint satisfies the conditions of the theorem, we first bound the sizes of the fingerprints and the number of iterations of the above procedure. To do so, let $2 \le m_0 \le m$ be minimal such that $|C_{m_0}| \le 3k \binomn$, and observe that, by Property~$(iv)$ and the definition~\eqref{def:tau} of $\tau(A)$,
      \begin{equation}\label{eq:fingerprint_upper_bound}
        \tau(C_i) |C_i| \le \begin{cases} n^{-3} \cdot 2^n &\text{if } i < m_0, \\
                                             n^{-1} \cdot (1-\delta)^{i-m_0} \cdot 3k \binomn & \text{otherwise.}
        \end{cases}
      \end{equation}
 The geometric decay of $|C_i|$ moreover immediately implies that $m = O(\log n)$. We thus obtain
%      \begin{equation*}
%        \sum_{i=1}^m |T_i| \le k \sum_{i=1}^m \tau(C_i) |C_i| \le O(\log n) \cdot \frac{2^n}{n^3} + \sum_{i = m_0}^m (1-\delta)^{i-m_0} \cdot 3k \cdot \frac{1}{n} \binomn \le \frac{K}{n} \binomn,
%      \end{equation*}
      \begin{equation}\label{eq:sum_fingerprints}
        \sum_{i=1}^{m_0-1} \tau(C_i)|C_i| \le \frac{m \cdot 2^n}{n^3} \ll \frac{1}{n^2}\binomn \qquad\text{and}\qquad \sum_{i = m_0}^m \tau(C_i)|C_i| = \frac{O(1)}{n} \binomn.
      \end{equation}
      Since $|T(I)| = \sum_{i=1}^m |T_i| \le \sum_{i=1}^m k\tau(C_i) |C_i|$, adding the two bounds immediately proves $(a)$. Also, since $n$ is sufficiently large,
      \begin{equation*}
        |C(T(I))| = |C_{m+1}| + |T_1 \cup \cdots \cup T_m| \le (k-1+2\varepsilon)\binomn,
      \end{equation*}
      which proves $(c)$, since $\eps > 0$ was arbitrary.

      It only remains to prove $(b)$, which follows using Lemma~\ref{lemma:slogs}. The first step is to partition the collection of $s$-sets in $\mathcal{S}$ into subfamilies $\mathcal{S}(\hat{m}_0, \mathbf{t})$, where for given $\hat{m}_0 \in \N$ and $\mathbf{t} = (t_1,\ldots,t_{\hat{m}}) \in \N^{\hat{m}}$, we define $\mathcal{S}(\hat{m}_0, \mathbf{t})$ to be set of all $(T_1, \ldots, T_{\hat{m}}) \in \mathcal{S}$ such that $\hat{m}_0$ is the smallest integer for which $|C_{\hat{m}_0}| \le 3k \binomn$ and moreover $|T_i| = t_i$ for each $i \in [\hat{m}]$.

   In order to bound the number of elements of $\mathcal{S}(\hat{m}_0, \mathbf{t})$ of size $s$, set $s_1 = \sum_{i=\hat{m}_0}^{\hat{m}} t_i$, and observe that
 \begin{equation}\label{eq:log_binomial_bound}
          \sum_{i=\hat{m}_0}^{\hat{m}} t_i \log \frac{1}{t_i} \leq s_1 \log \frac{1}{s_1} + \frac{O(1)}{n} \binomn,
        \end{equation}
   by Lemma~\ref{lemma:slogs} and the second bound in~\eqref{eq:fingerprint_upper_bound}. Since each $T_i$ is a subset of the corresponding $C_i$, we can use the trivial bound $|C_i| \le 2^n$ and the definition of ${\hat{m}_0}$ to write
        \begin{align*}
        \big|\mathcal{S}({\hat{m}_0}, \mathbf{t})\big|&\leq \prod_{i=1}^{{\hat{m}_0}-1} \binom{2^n}{t_i} \prod_{i={\hat{m}_0}}^{\hat{m}} \binom{3k\binomn}{t_i} \\
                                        &\leq \left( \prod_{i=1}^{{\hat{m}_0}-1} 2^{t_in} \right) \left(\left[3ek \cdot \binomn\right]^{s_1} \prod_{i={\hat{m}_0}}^{\hat{m}} \left(\frac{1}{t_i}\right)^{t_i}\right) \\
                                        &\leq \left(\frac{K \binomn}{s_1}\right)^{s_1} \exp\left(\frac{K}{n}\binomn\right)
        \end{align*}
        where the final step follows from the first sum in \eqref{eq:sum_fingerprints} and from applying the exponential function to \eqref{eq:log_binomial_bound}. Finally, note that the right-hand side is monotone in $s_1$ on the interval $\big(0, K\binomn/e\big)$,
 and we can therefore replace $s_1$ by $s$. Summing over the (at most
 $n^{O(n)}$) choices of $\mathbf{t}$, ${\hat{m}_0}$ and $\hat{m}$, the claimed bound follows.
    \end{proof}


\begin{thebibliography}{1}

\bibitem{2012arXiv1204.6530B}
J.~{Balogh}, R.~{Morris}, and W.~{Samotij},
\newblock {Independent sets in hypergraphs},
\newblock \emph{J. Amer. Math. Soc.}, \textbf{28} (2015), 669--709.

\bibitem{BMT}
J.~{Balogh}, R.~{Mycroft} and A.~{Treglown},
\newblock{A random version of Sperner's theorem},
\newblock \emph{J. Combin. Theory, Ser. A}, \textbf{128} (2014), 104--110.

\bibitem{MR0183653}
B.~Bollob\'as,
\newblock On generalized graphs,
\newblock {\em Acta Math. Acad. Sci. Hungar}, \textbf{16} (1965), 447--452.

\bibitem{2010arXiv1011.4310C}
D.~{Conlon} and W.~T. {Gowers},
\newblock {Combinatorial theorems in sparse random sets},
\newblock submitted.

\bibitem{2013arXiv1302.5210D}
S.~{Das}, W.~{Gan}, and B.~{Sudakov},
\newblock {Sperner's Theorem and a Problem of Erd\H{o}s--Katona--Kleitman},
\newblock \emph{Combin. Prob. Computing}, \textbf{24} (2015), 585--608.

\bibitem{Erdos}
P.~{Erd\H{o}s},
\newblock On a Lemma of Littlewood and Offord,
\newblock {\em Bull. Amer. Math. Soc.}, \textbf{51} (1945), 898--902.

\bibitem{MR0232685}
D.~{Kleitman},
\newblock A conjecture of {E}rd{\H o}s-{K}atona on commensurable pairs among
  subsets of an {$n$}-set,
\newblock In {\em Theory of {G}raphs ({P}roc. {C}olloq., {T}ihany, 1966)},
  pages 215--218. Academic Press, New York, 1968.

\bibitem{KK} Y. Kohayakawa and B. Kreuter, The Width of Random Subsets of Boolean Lattices, \emph{J. Combin. Theory, Ser. A}, \textbf{100} (2002), 376--386.

\bibitem{KKO} Y. Kohayakawa, B. Kreuter and D.~Osthus, The length of random subsets of Boolean lattices, \emph{Random Struct. Algorithms}, \textbf{16} (2000), 177--194.

\bibitem{K} B.~Kreuter, Small sublattices in random subsets of Boolean lattices, \emph{Random Struct. Algorithms}, \textbf{13} (1998), 383--407.

\bibitem{MR0194348}
D.~Lubell,
\newblock A short proof of {S}perner's lemma,
\newblock {\em J. Combin. Theory}, \textbf{1} (1966), 299.

\bibitem{MR0150049}
L.D.~Me{\v{s}}alkin,
\newblock A generalization of {S}perner's theorem on the number of subsets of a finite set,
\newblock {\em Teor. Verojatnost. i Primenen}, \textbf{8} (1963), 219--220.

\bibitem{2013arXiv1309.2927M}
R.~{Morris} and D.~{Saxton},
\newblock {The number of $C_{2\ell}$-free graphs},
\newblock to appear in \emph{Adv. Math.}

\bibitem{MR1757282}
D.~{Osthus},
\newblock Maximum antichains in random subsets of a finite set,
\newblock {\em J. Combin. Theory, Ser. A}, \textbf{90} (2000), 336--346.

\bibitem{MR0144366}
A.~{R{\'e}nyi},
\newblock On random subsets of a finite set,
\newblock {\em Mathematica (Cluj)}, 355--362, 1961.

\bibitem{2012arXiv1204.6595S}
D.~{Saxton} and A.~{Thomason},
\newblock {Hypergraph containers},
\newblock \emph{Inventiones Math.}, \textbf{201} (2015), 925--992.

\bibitem{Schacht09}
M.~{Schacht},
\newblock {Extremal results for random discrete structures},
\newblock submitted.

\bibitem{MR1544925}
E.~{Sperner},
\newblock Ein {S}atz \"uber {U}ntermengen einer endlichen {M}enge,
\newblock {\em Math. Z.}, \textbf{27} (1928), 544--548.

\bibitem{MR0067086}
K.~Yamamoto,
\newblock Logarithmic order of free distributive lattice,
\newblock {\em J. Math. Soc. Japan}, \textbf{6} (1954), 343--353.

\end{thebibliography}
\end{document}